\pgfplotsset{compat=1.18}
\newtheorem{theorem}{Theorem}
\newtheorem{lemma}[theorem]{Lemma}
\newtheorem{proposition}[theorem]{Proposition}
\newtheorem{remark}[theorem]{Remark}
\newtheorem{corollary}[theorem]{Corollary}
\newtheorem{assumption}{Assumption}
\newtheorem{example}{Example}
\newcommand{\N}{{\mathbb N}}
\newcommand{\E}{{\mathbb E}}
\newcommand{\PP}{{\mathbb P}}
\newcommand*\diff{\mathop{}\!\mathrm{d}}
\title[]{Boundary-preserving Lamperti-splitting schemes for some stochastic differential equations}
\author[]{Johan Ulander}
\address{Department of Mathematical Sciences,
Chalmers University of Technology, 41296~Gothenburg, Sweden}
\email{\tt johanul@chalmers.se}
\subjclass{60H10, 60H35, 65C30.}
\keywords{Stochastic differential equations, Lamperti transform, Lie--Trotter splitting scheme, boundary-preserving numerical scheme, $L^{p}(\Omega)$-convergence.}
\begin{document}

\begin{abstract}
We propose and analyse boundary-preserving schemes for the strong approximations of some scalar SDEs with non-globally Lipschitz drift and diffusion coefficients whose state-space is bounded. The schemes consists of a Lamperti transform followed by a Lie--Trotter splitting. We prove $L^{p}(\Omega)$-convergence of order $1$, for every $p \geq 1$, of the schemes and exploit the Lamperti transform to confine the numerical approximations to the state-space of the considered SDE. We provide numerical experiments that confirm the theoretical results and compare the proposed Lamperti-splitting schemes to other numerical schemes for SDEs.
\end{abstract}

\maketitle
\begin{sloppypar}

\section{Introduction}\label{sec:intro}
Stochastic differential equations (SDEs) are nowadays widely used to model various phenomena. Classical examples are found in physics, engineering, financial mathematics, mathematical biology, epidemic modelling, etc. \cite{GraySIS, Karlin1981ASC, introTostocCalc, MR1214374, MR2001996}. An important feature of some SDEs is that the state-space is a strict subset of the target-space. In this article we propose numerical schemes whose approximations only take values in the state-space of the considered SDE. We say that a numerical scheme with this property is boundary-preserving. Boundary-preserving methods have received a lot of attention the last two decades. Without being exhaustive, we mention the following articles \cite{MR3006996, pospre, positivity, MR4220738,MR3433041, BPAit, MR4544037, DomPres, MR2341800, MR3248050, Schurz1995NumericalRF, MR4274899, Yang2021FirstOS, MR4268206}. Applications include, for example, population dynamics models \cite{MR4220738, MR4274899, Yang2021FirstOS, MR4268206} and heat flow models \cite{pospre,positivity}, defined e.g. in $[0,1]$ and defined in $(0,\infty)$, respectively.

In this work we propose and study order $1$ strongly convergent splitting schemes for scalar time-homogeneous Itô-type SDEs
\begin{equation}\label{eq:SDE}
\left\lbrace
\begin{aligned}
& \diff X(t) = f(X(t)) \diff t + g(X(t)) \diff B(t),\ t \in (0,T], \\ 
& X(0) = x_{0} \in D,
\end{aligned}
\right.
\end{equation}
where $T>0$, $f,g : \mathbb{R} \to \mathbb{R}$ are given functions satisfying some conditions to be specified in Section~\ref{sec:setting}, $B$ is a standard Brownian motion and $D \subset \mathbb{R}$ is the bounded state-space of the solution to the SDE in \eqref{eq:SDE}. We assume that $x_{0}$ is deterministic and given. Typically, $f$ and $g$ are non-globally Lipschitz functions and hence the SDE in equation~\eqref{eq:SDE} is not covered by classical SDE theory. As we will see in Section~\ref{sec:setting}, under some assumptions and provided that $x_{0} \in \mathring{D} = \{ x \in D:\ x \not\in \partial D \}$, where $\partial D$ denotes the boundary of $D$, the solution $X$ takes values in the interior $\mathring{D}$ of the domain $D$. For precise definition of the setting, see Section~\ref{sec:setting}. 

Examples of applications include some instances of the Susceptible–Infected–Susceptible (SIS) epidemic model \cite{MR4220738, MR3015083, GraySIS, MR4274899, Yang2021FirstOS, MR4268206}, the Nagumo SDE \cite{MR3308418, MCKEAN1970209} and an Allen--Cahn type SDE \cite{ALLEN19791085, MR3986273, Funaki1995TheSL, MR3308418}. We consider the SIS epidemic model corresponding to the choices $f(x) = x - x^2$ and $g(x) = x - x^2$ and is also known as the simplest Wright--Fisher diffusion for a gene frequency model. The Nagumo SDE corresponds to the choices $f(x) = -x (1-x) (1-x)$ and $g(x) = - x + x^2$. The Allen--Cahn type SDE corresponds to the choices $f(x) = x - x^3$ and $g(x) = 1 - x^2$. We provide short discussions and motivations for these models in Section~\ref{sec:num}.

The proposed schemes combine the Lamperti transform with a time splitting procedure. The Lamperti transform applied to the SDE in \eqref{eq:SDE} guarantees that the schemes are boundary-preserving. We employ a Lie--Trotter time splitting of the resulting transformed SDE to obtain tractable sub-problems. The proposed numerical schemes are defined in equation~\eqref{eq:LampSplitDisc} and in equation~\eqref{eq:LampSplitDisc2}.

The main results of the paper are the following:
\begin{itemize}
\item We propose approximation procedures for SDEs of the form in~\eqref{eq:SDE} that is boundary-preserving, see Proposition~\ref{propo:BP-scheme} and Proposition~\ref{propo:BP-scheme_v2}.
\item We prove $L^{p}(\Omega)$-convergence of order $1$ for every $p \geq 1$, see Theorem~\ref{theo:main_v2} and Theorem~\ref{theo:main_v3}, and almost sure pathwise convergence of order $1-\epsilon$, for every $\epsilon>0$, see Corollary~\ref{cor:as_conv} and Corollary~\ref{cor:as_conv2}.
\end{itemize}

The literature on schemes based on the Lamperti transform and on time splitting schemes is extensive. Without being exhaustive, we mention the following articles \cite{MR4220738, BPAit, MR4544037, MR3248050, Yang2021FirstOS, MR4268206} on schemes based on the Lamperti transform and the following references \cite{pospre,positivity, MR3986273, MR2840298, MR3433041, BPAit, MR4544037, MR2009376, MR2341800} on time splitting schemes for differential equations. To the best of our knowledge, only the two recent articles \cite{BPAit, MR4544037} combines these two approaches to construct a positivity-preserving scheme for the Aït-Sahalia model and the Cox–Ingersoll–Ross (CIR) process, respectively. The CIR model considered in \cite{MR4544037} has an affine function as drift coefficient and the diffusion coefficient is $1/2$-Hölder continuous. In the present paper, we consider drift and diffusion coefficients that can have superlinear growth.

Before closing the introduction, we would like to compare the proposed schemes to the literature on numerical schemes based on the Lamperti transform on similar problems. We first mention the paper \cite{MR4220738}, where the authors prove strong convergence of order $1$ for a family of stochastic SIS equations using a Lamperti transform followed by smoothing the drift coefficient. The smoothing strategy in \cite{MR4220738} enables the authors to obtain $L^{2}(\Omega)$-convergence of order $1$ for quite general drifts (essentially requiring $C^{2}$ on the closure of the domain) and for a diffusion coefficient of the form $x(1-x)$, exploiting for example inverse moment bounds of the exact solution and exponential integrability of the transformed SDE. In this work we consider similar drifts coefficients (requiring $C^{2}$ on the closure of the domain, see Assumption~\ref{ass:f}, and a decay condition near the boundary points, see Assumption~\ref{ass:fg}) and more general diffusion coefficients (See Assumption~\ref{ass:g}). After using the Lamperti transform, we apply a Lie--Trotter splitting. This approach enables us to establish representation formulas (see equation~\eqref{eq:expressYLS} and equation~\eqref{eq:expresshatYLS}) for the approximate solutions that are similar to the integral equation for the exact solution of the SDE. From this we obtain $L^{p}(\Omega)$-convergence of order $1$, for every $p \geq 1$, and almost sure pathwise convergence of order $1-\epsilon$ for every $\epsilon>0$. We also mention the articles \cite{MR4274899, Yang2021FirstOS, MR4268206} and \cite{MR3006996, MR3248050}, where the authors apply the Lamperti transform followed by the (truncated) Euler–Maruyama (EM) schemes and the semi-implicit Euler–Maruyama (SEM) scheme, respectively, to the transformed SDEs. \cite{MR4274899} considers SIS SDEs and the authors obtain improved, although not as general, results compared to \cite{MR4220738} discussed above. \cite{MR4268206} establishes $L^{p}(\Omega)$-convergence and almost sure pathwise convergence for Lamperti (truncated) EM schemes for general SDEs defined in $(0,\infty)$. \cite{MR3006996, MR3248050} obtain $L^{p}(\Omega)$-convergence of order $1$ for the Lamperti SEM scheme for some SDEs defined in domains under slightly different conditions on the drift and diffusion coefficients. By the assumptions in Sections~\ref{sec:setting} and~\ref{sec:scheme_v2}, such explicit and implicit Lamperti-based schemes are also covered in the proposed schemes in the present setting. In future works, however, the proposed approach could possibly be extended to cases where Lamperti (truncated) EM and Lamperti SEM are not applicable.

This paper is organized as follows. Section~\ref{sec:setting} is devoted to presenting the setting, assumptions and some properties of the considered SDE. In Section~\ref{sec:scheme} and Section~\ref{sec:scheme_v2} we define the semi-analytic and fully-discrete, respectively, Lamperti-splitting schemes and state and prove boundary-preservation and strong convergence.  Lastly, in order to support our theoretical results in Section~\ref{sec:scheme} and Section~\ref{sec:scheme_v2}, we provide numerical experiments in Section~\ref{sec:num}. 

\section{Setting}\label{sec:setting}
In this section we introduce the notation and the assumptions on the considered SDE~\eqref{eq:SDE}. Let $(\Omega,\mathcal{F},\mathbb{P})$ be a fixed probability space equipped with a filtration $\bigl(\mathcal{F}_t\bigr)_{t\ge 0}$ that satisfies the usual conditions. We denote by $\mathbb{E}[\cdot]$ the expectation operator and $C(a_{1},\ldots,a_{l})$ denotes a (non-random) generic constant that depends on the parameters $a_{1},\ldots,a_{l}$ and that may vary from line to line. Throughout the paper, most equalities and inequalities should be understood in the almost sure sense, we do not specify this everywhere to avoid repetition.

\subsection{Description of the SDE}\label{sec:sde}
We first discuss some preliminaries and introduce the main assumptions needed for the definition and analysis of the proposed Lamperti-splitting (LS) schemes. The general idea of the Lamperti transform is to transform an SDE into another SDE with state-independent diffusion coefficient \cite{MR1214374,  Mller2010FromSD}. More precisely, provided that everything is well-defined, the Lamperti transform of the SDE in equation~\eqref{eq:SDE} with $x \in \mathring{D}$ is given by
\begin{equation}\label{eq:LampTrans}
\Phi(x) = \int_{w_{0}}^{x} \frac{1}{g(w)} \diff w
\end{equation}
where $\mathring{D} = (l,r)$ for some $l, r \in \mathbb{R}$ and for some $w_{0} \in \mathring{D}$. We also let $\bar{D} = [l,r]$. By Itô's formula, the process $Y(t) = \Phi(X(t))$ satisfies
\begin{equation}\label{eq:SDE-Lamp}
\left\lbrace
\begin{aligned}
& \diff Y(t) = \tilde{H}(Y(t)) \diff t + \diff B(t),\\ 
& Y(0) = \Phi(x_{0}),
\end{aligned}
\right.
\end{equation}
where we define
\begin{equation*}
\tilde{H}(x) = \frac{f(\Phi^{-1}(x))}{g(\Phi^{-1}(x))} - \frac{1}{2} g'(\Phi^{-1}(x)).
\end{equation*}
We require $x_{0}, w_{0} \in \mathring{D}$ because, by Assumption~\ref{ass:g} below, $\Phi$ has singularities on $\partial D$. Also observe that $w_{0} = x_{0}$ is a valid choice for the lower integration limit in the Lamperti transform. Let us also denote by
\begin{equation}\label{eq:Hdef}
H(x) = \tilde{H}(x) - \mu
\end{equation}
for some $\mu \in \mathbb{R}$. The introduction of $H$ allows us to transfer the constant $\mu$ between the ODE part and the SDE part of the splitting scheme (see Section~\ref{sec:schemeconstruction}). We now list the assumptions that we need to guarantee that the above is well-defined.
\begin{assumption}\label{ass:f}
The drift coefficient $f \in C^{2} \left(\bar{D}\right)$.
\end{assumption}
\begin{assumption}\label{ass:g}
The diffusion coefficient $g \in C^{3} \left(\bar{D} \right) $ and is strictly positive on $\mathring{D} = (l,r)$,  and, for any $w_{0} \in \mathring{D}$, the following non-integrability conditions are satisfied
\begin{equation}\label{eq:intCond}
\int_{w_{0}}^{l} \frac{1}{g(w)} \diff w = - \infty,\ \int_{w_{0}}^{r} \frac{1}{g(w)} \diff w = \infty.
\end{equation}
\end{assumption}
\begin{assumption}\label{ass:fg}
The drift coefficient $f$ decays at least as fast as the diffusion coefficient $g$ near the boundary points $\partial D$; that is, the following limits exist and are finite
\begin{equation}\label{eq:limCond}
\left| \lim_{x \searrow l} \frac{f(x)}{g(x)} \right| + \left| \lim_{x \nearrow r} \frac{f(x)}{g(x)} \right| < \infty.
\end{equation}
\end{assumption}
The above assumptions implies regularity properties of the inverse of the Lamperti transform $\Phi^{-1}$ and of the modified drift $H$ in equation~\eqref{eq:Hdef} that we summarise in the following two propositions.
\begin{proposition}\label{prop:phiinv}
Suppose Assumption~\ref{ass:g} is satisfied. Then $\Phi^{-1}: \mathbb{R} \to \mathring{D}$ is bounded, bijective, continuously differentiable and has bounded derivative. In particular, $\Phi^{-1}: \mathbb{R} \to \mathring{D}$ is globally Lipschitz continuous and we denote the Lipschitz constant of $\Phi^{-1}$ by $L_{\Phi^{-1}}$. 
\end{proposition}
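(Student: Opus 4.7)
The plan is to work directly from the definition $\Phi(x)=\int_{w_0}^{x} g(w)^{-1}\diff w$ for $x\in\mathring{D}=(l,r)$ and deduce the four properties in the order: bijectivity, differentiability, boundedness of the derivative, global Lipschitz continuity.

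First I would establish that $\Phi:\mathring{D}\to\mathbb{R}$ is a $C^{1}$-diffeomorphism. Since $g\in C^{3}(\bar{D})$ with $g>0$ on $(l,r)$ by Assumption~\ref{ass:g}, the integrand $1/g$ is continuous and positive on $\mathring{D}$, so $\Phi$ is well-defined, $C^{1}$, and strictly increasing with $\Phi'(x)=1/g(x)>0$. The two non-integrability conditions in~\eqref{eq:intCond} then give $\lim_{x\searrow l}\Phi(x)=-\infty$ and $\lim_{x\nearrow r}\Phi(x)=+\infty$, so by the intermediate value theorem $\Phi$ is a bijection from $\mathring{D}$ onto $\mathbb{R}$. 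Consequently $\Phi^{-1}:\mathbb{R}\to\mathring{D}$ exists and is a bijection, and the image is contained in the bounded interval $(l,r)$, which gives boundedness of $\Phi^{-1}$.

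Next I would apply the inverse function theorem. Since $\Phi$ is $C^{1}$ with nowhere-vanishing derivative on $\mathring{D}$, $\Phi^{-1}$ is $C^{1}$ on $\mathbb{R}$ with
\begin{equation*}
(\Phi^{-1})'(y)=\frac{1}{\Phi'(\Phi^{-1}(y))}=g(\Phi^{-1}(y)),\qquad y\in\mathbb{R}.
\end{equation*}
Because $g\in C^{3}(\bar{D})$ and $\bar{D}=[l,r]$ is compact, $g$ is bounded on $\bar{D}$; in particular $\|(\Phi^{-1})'\|_{\infty}\le \sup_{x\in\bar{D}}|g(x)|<\infty$, so $\Phi^{-1}$ has bounded derivative.

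Finally, the global Lipschitz estimate is immediate from the mean value theorem: for any $y_1,y_2\in\mathbb{R}$,
\begin{equation*}
|\Phi^{-1}(y_1)-\Phi^{-1}(y_2)|\le \|(\Phi^{-1})'\|_{\infty}\,|y_1-y_2|,
\end{equation*}
which yields the Lipschitz constant $L_{\Phi^{-1}}=\sup_{x\in\bar{D}}|g(x)|$. There is no real obstacle here; the only point that requires a moment of care is using the non-integrability conditions of Assumption~\ref{ass:g} to guarantee that the range of $\Phi$ is all of $\mathbb{R}$ (rather than a bounded interval), which is exactly what makes $\Phi^{-1}$ defined on the whole line.
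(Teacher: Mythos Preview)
Your proof is correct and follows essentially the same route as the paper: both use positivity of $g$ together with the non-integrability conditions to obtain that $\Phi:\mathring{D}\to\mathbb{R}$ is a bijection, then invoke the inverse function theorem to compute $(\Phi^{-1})'=g\circ\Phi^{-1}$, and finally use continuity of $g$ on the compact set $\bar{D}$ to bound the derivative and conclude Lipschitz continuity. Your write-up is in fact slightly more explicit (you spell out the use of the intermediate value theorem and the mean value theorem, and you identify $L_{\Phi^{-1}}=\sup_{\bar{D}}|g|$), but the argument is the same.
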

\begin{proof}[Proof of Proposition~\ref{prop:phiinv}]
The assumption that $g(x)>0$ for every $x \in \mathring{D}$ and the non-integrability conditions in equation~\eqref{eq:intCond} implies that $\Phi^{-1}: \mathbb{R} \to \mathring{D}$ is bijective and differentiable. The latter follows from the inverse function theorem since $\Phi'(x) = \frac{1}{g \left( \Phi^{-1}(x) \right)} > 0$ for every $x \in \mathring{D}$. For boundedness of $ \frac{\diff}{\diff x} \Phi^{-1}(x)$, we can compute, using the inverse function theorem and the chain rule,
\begin{equation*}
\frac{\diff}{\diff x} \Phi^{-1}(x) = \frac{1}{\Phi'\left( \Phi^{-1}(x) \right)} = g \left( \Phi^{-1}(x) \right),
\end{equation*}
for every $x \in \mathbb{R}$. Since $g$ is continuous on $\bar{D}$, by Assumption~\ref{ass:g}, and $\Phi^{-1}(x) \in \mathring{D}$ for every $x \in \mathbb{R}$, the above expressions implies that $\frac{\diff}{\diff x} \Phi^{-1}(x)$ is continuous and uniformly bounded. Since any differentiable function that has a uniformly bounded derivative is globally Lipschitz continuous, we conclude that $\Phi^{-1}$ is globally Lipschitz continuous. 
\end{proof}
Observe that we could continue the argument in the proof of Proposition~\ref{prop:phiinv} to conclude that $\Phi^{-1} \in C^{4} \left( \mathbb{R} \right)$ with bounded derivatives up to order $4$.
\begin{proposition}\label{prop:H}
Suppose Assumptions~\ref{ass:f},~\ref{ass:g} and~\ref{ass:fg} are satisfied. Then $H \in C^{2} \left(\mathbb{R}\right)$ and
\begin{equation*}
L_{H} = \sup_{x \in \mathbb{R}} |H(x)+\mu| + \sup_{x \in \mathbb{R}} |H'(x)| + \sup_{x \in \mathbb{R}} |H''(x)| < \infty
\end{equation*}
and is independent of $\mu$. In particular, $H, H': \mathbb{R} \to \mathbb{R}$ are globally Lipschitz continuous and $L_{H}$ is an upper bound for the Lipschitz constants of $H$ and $H'$.
\end{proposition}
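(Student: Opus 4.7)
The plan is to view $H$ as the shift $\tilde H - \mu$ of the Lamperti drift, note that differentiation kills the additive constant so that $H' = \tilde H'$ and $H'' = \tilde H''$, and that $\sup_{x}|H(x)+\mu| = \sup_{x}|\tilde H(x)|$; in particular, all three suprema in the definition of $L_H$ are automatically independent of $\mu$. It therefore suffices to verify that $\tilde H$, $\tilde H'$ and $\tilde H''$ are continuous and uniformly bounded on $\mathbb{R}$.

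The preparatory observation is that by Proposition~\ref{prop:phiinv} the map $\Phi^{-1}$ takes values in $\mathring{D} \subset \bar{D}$, the interval $\bar{D}=[l,r]$ is compact, and Assumptions~\ref{ass:f} and~\ref{ass:g} provide uniform bounds on $\bar{D}$ for $f, f', f''$ and for $g, g', g'', g'''$. The only quantity appearing in $\tilde H$ which is not obviously bounded is the ratio $f/g$, since $g$ may vanish at the endpoints $l$ and $r$ (indeed, the non-integrability condition in Assumption~\ref{ass:g} essentially forces this). This is exactly where Assumption~\ref{ass:fg} enters: the existence of finite one-sided limits of $f/g$ at the boundary, together with continuity on $\mathring{D}$, implies that $f/g$ extends continuously to $\bar{D}$ and is therefore uniformly bounded. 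Combined with the boundedness of $g'$ on $\bar{D}$, this already gives that $\tilde H$ is continuous and uniformly bounded on $\mathbb{R}$.

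Next I would differentiate using the chain rule and the identity $(\Phi^{-1})'(x) = g(\Phi^{-1}(x))$ from Proposition~\ref{prop:phiinv}. The apparent difficulty is that differentiating $f/g$ produces factors of $1/g^{2}$, but each application of the chain rule supplies a compensating factor of $g$ from the Jacobian. The key absorption identity is
\begin{equation*}
(f/g)'(y)\, g(y) \;=\; f'(y) \;-\; \frac{f(y)}{g(y)}\, g'(y),
\end{equation*}
which yields
\begin{equation*}
\tilde H'(x) = f'(\Phi^{-1}(x)) - \frac{f(\Phi^{-1}(x))}{g(\Phi^{-1}(x))}\, g'(\Phi^{-1}(x)) - \tfrac{1}{2} g''(\Phi^{-1}(x))\, g(\Phi^{-1}(x)),
\end{equation*}
a polynomial expression in the uniformly bounded quantities $f'$, $f/g$, $g$, $g'$ and $g''$ evaluated on $\bar{D}$. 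Differentiating once more and applying the same absorption identity to the remaining $(f/g)'$ term expresses $\tilde H''$ as a polynomial in the bounded quantities $f/g$, $f'$, $f''$, $g$, $g'$, $g''$ and $g'''$, again evaluated on $\bar{D}$. Hence $\tilde H \in C^{2}(\mathbb{R})$ with uniformly bounded derivatives up to second order, which produces the asserted finite $L_H$.

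Global Lipschitz continuity of $H$ and $H'$ will then follow from the mean value theorem, since any differentiable function on $\mathbb{R}$ with uniformly bounded derivative is globally Lipschitz with Lipschitz constant at most the sup of that derivative, and both are bounded by $L_H$. The main obstacle is purely the algebraic bookkeeping in the second step: one has to check that at every stage of the chain-rule differentiation, each occurrence of the singular factor $1/g$ is paired with a Jacobian factor $g(\Phi^{-1}(\cdot))$, so that Assumption~\ref{ass:fg} can be invoked to control the potential boundary blow-up. No further analytic input beyond Assumptions~\ref{ass:f}--\ref{ass:fg} is needed.
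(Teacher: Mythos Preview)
Your proposal is correct and follows essentially the same route as the paper: compute $H'$ and $H''$ via the chain rule together with $(\Phi^{-1})'=g\circ\Phi^{-1}$, observe that every singular $1/g$ is cancelled by a Jacobian factor so that only the ratio $f/g$ survives, and control the latter using Assumption~\ref{ass:fg}. The only cosmetic difference is that you package the boundary control by saying $f/g$ extends continuously to the compact set $\bar D$, whereas the paper checks the two one-sided limits explicitly; the content is the same.
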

Observe that we include
\begin{equation*}
\sup_{x \in \mathbb{R}} | H(x) + \mu | = \sup_{x \in \mathbb{R}} |\tilde{H}(x)|
\end{equation*}
in the definition of $L_{H}$ in order for it to be independent of $\mu$ (but dependent on $f$ and $g$). Also note that Proposition~\ref{prop:H} implies that
\begin{equation*}
\sup_{x \in \mathbb{R}} |H(x)| \leq L_{H} + |\mu|.
\end{equation*}
\begin{proof}[Proof of Proposition~\ref{prop:H}]
Recall that
\begin{equation}\label{eq:H}
H(x) = \frac{f(\Phi^{-1}(x))}{g(\Phi^{-1}(x))} - \frac{1}{2} g'(\Phi^{-1}(x)) - \mu.
\end{equation}
By the quotient rule, the chain rule and by the inverse function theorem, we can compute
\begin{equation}\label{eq:Hprim}
H'(x) = f' \left( \Phi^{-1}(x) \right) - \frac{f \left( \Phi^{-1}(x) \right) g' \left( \Phi^{-1}(x) \right)}{g \left( \Phi^{-1}(x) \right)} - \frac{1}{2} g'' \left( \Phi^{-1}(x) \right) g \left( \Phi^{-1}(x) \right),
\end{equation}
for every $x \in \mathbb{R}$, and
\begin{equation}\label{eq:Hprimprim}
  \begin{split}
    H''(x) &= f'' \left( \Phi^{-1}(x) \right) g \left( \Phi^{-1}(x) \right) - f' \left( \Phi^{-1}(x) \right) g' \left( \Phi^{-1}(x) \right) \\
    &- f \left( \Phi^{-1}(x) \right) g'' \left( \Phi^{-1}(x) \right) + \frac{f \left( \Phi^{-1}(x) \right) \left( g' \left( \Phi^{-1}(x) \right) \right)^{2}}{g \left( \Phi^{-1}(x) \right)} \\
    &- \frac{1}{2} g''' \left( \Phi^{-1}(x) \right) \left( g \left( \Phi^{-1}(x) \right) \right)^{2}  \\
    &- \frac{1}{2} g'' \left( \Phi^{-1}(x) \right) g' \left( \Phi^{-1}(x) \right) g \left( \Phi^{-1}(x) \right),
  \end{split}
\end{equation}
for every $x \in \mathbb{R}$. By Assumptions~\ref{ass:f} and~\ref{ass:g} all terms in equations~\eqref{eq:H},~\eqref{eq:Hprim} and~\eqref{eq:Hprimprim} are continuous and bounded, except possibly the terms containing division by $g \left( \Phi^{-1}(x) \right)$. Since $g(x) > 0$ for every $x \in \mathring{D}$ by Assumption~\ref{ass:g} and $\Phi^{-1}(x) \in \mathring{D}$ for every $x \in \mathbb{R}$ by Proposition~\ref{prop:phiinv}, it suffices to verify that the quotients do not blow up as $x \to -\infty$ and $x \to \infty$. Observe that this is equivalent to that the corresponding quotients without $\Phi^{-1}(x)$ do not blow up as $x \searrow l$ and as $x \nearrow r$. But this follows from Assumption~\ref{ass:fg} and from uniform boundedness of $g' \left( \Phi^{-1}(x) \right)$:
\begin{equation*}
\left| \lim_{x \searrow l} \frac{f(x) g'(x)}{g(x)} \right| \leq \sup_{y \in \bar{D}} |g'(y)| \left| \lim_{x \searrow l} \frac{f(x)}{g(x)} \right| < \infty
\end{equation*}
and
\begin{equation*}
\left| \lim_{x \searrow l} \frac{f(x) \left( g'(x) \right)^{2}}{g(x)} \right| < \sup_{y \in \bar{D}} |g'(y)|^{2} \left| \lim_{x \searrow l} \frac{f(x)}{g(x)} \right| < \infty
\end{equation*}
and similarly for $x \nearrow r$. Thus, $H+\mu$, $H'$ and $H''$ are continuous and uniformly bounded by the constant $L_{H}$ that is independent of $\mu$.
\end{proof}
We remark that it is essential for any numerical scheme that utilises the Lamperti transform that $\Phi$ is well-defined, which Assumption~\ref{ass:g} guarantees by Proposition~\ref{prop:phiinv}. Moreover, that $\Phi: \mathring{D} \to \mathbb{R}$ is bijective and continuous implies, using also Assumptions~\ref{ass:f} and~\ref{ass:fg} by Proposition~\ref{prop:H}, that $\partial D$ is unattainable by $X$ without reference to Feller's boundary classification (see Section~\ref{sec:BB}). For a detailed and elaborate discussion on Feller's boundary classification see, for example, \cite{Karlin1981ASC}.

Assumption~\ref{ass:g} is satisfied in, for example, \cite{MR4220738, MR4274899, Yang2021FirstOS} where the authors obtain $L^{2}(\Omega)$-convergence of order $1$. On the other hand, Assumption~\ref{ass:g} is, for example, not satisfied for the CIR model and the authors of \cite{MR4544037} do not recover $L^{2}(\Omega)$-convergence of order $1$. Observe that for the CIR model, the ODE in~\eqref{eq:ODE-Lamp} admits an exact solution and so the methodology in this work could be applicable. But a different proof strategy has to be used, as neither Assumption~\ref{ass:g} nor Assumption~\ref{ass:fg} are satisfied for the CIR model.

A key step in the construction of the proposed LS schemes is to apply a Lie--Trotter time splitting to the SDE in~\eqref{eq:SDE-Lamp}: we iteratively solve the nonlinear ODE
\begin{equation}\label{eq:ODE-Lamp}
\frac{\diff y(t)}{\diff t} = H(y(t))
\end{equation}
and the SDE for Brownian motion with drift
\begin{equation*}
\diff Z(t) = \mu \diff t + \diff B(t).
\end{equation*}
Proposition~\ref{prop:H} guarantees a unique and global solution to the ODE in equation~\eqref{eq:ODE-Lamp}.

At this point, we would like to mention an explicit and interesting class of possible choices of $f$ and $g$ that satisfy Assumptions~\ref{ass:f},~\ref{ass:g} and~\ref{ass:fg}. Let $g$ be a polynomial with no roots in $\mathring{D}$ and zeros at $x=l$ and $x=r$; that is, $g$ can be represented as
\begin{equation*}
g(x) = (x - l)^{\alpha_{l}} (x - r)^{\alpha_{r}} \tilde{g}(x)
\end{equation*}
where $\alpha_{l}, \alpha_{r} \in \{ 1,2,3,\ldots \}$ are the multiplicities of the roots $x=l$ and $x=r$, respectively, of $g$ and where $\tilde{g}$ is some polynomial with no roots in $\bar{D}$. Then $g$ satisfies Assumption~\ref{ass:g}. Let $f$ be another polynomial. Then $f$ satisfies Assumption~\ref{ass:f}. In order for Assumption~\ref{ass:fg} to be satisfied, $f$ must also have zeros at $x=l$ and $x=r$; that is, $f$ can be represented as
\begin{equation*}
f(x) = (x-l)^{\beta_{l}} (x-r)^{\beta_{r}} \tilde{f}(x)
\end{equation*}
where $\beta_{l}, \beta_{r} \in \{ 1,2,3,\ldots \}$ are the multiplicities of the roots $x=l$ and $x=r$, respectively, of $f$ and where $\tilde{f}$ is some polynomial with no roots at $x=l$ and $x=r$. Moreover, by L'Hôpital's rule, the multiplicities of the zeros $x=l$ and $x=r$ of $f$ must be at least as high as the multiplicities of the corresponding zeros of $g$; that is, $\alpha_{l} \leq \beta_{l}$ and $\alpha_{r} \leq \beta_{r}$. Then $f$ and $g$ satisfy Assumptions~\ref{ass:f},~\ref{ass:g} and~\ref{ass:fg} and are thus covered in the framework of this paper. In fact, the numerical examples that we provide in Section~\ref{sec:num} all belong to this class of polynomials.

After this preparation, we can define the class of SDEs that we consider in this work. We consider scalar time-homogeneous stochastic differential equations in the Itô sense
\begin{equation}\label{eq:SDEmain}
\left\lbrace
\begin{aligned}
& \diff X(t) = f(X(t)) \diff t + g(X(t)) \diff B(t),\ t \in (0,T], \\ 
& X(0) = x_{0} \in \mathring{D},
\end{aligned}
\right.
\end{equation}
where $T>0$ and $f$ and $g$ are such that Assumption~\ref{ass:f},~\ref{ass:g}, and~\ref{ass:fg} and are satisfied. We say that a stochastic process $\left( X(t) \right)_{t \in [0,T]}$ is a (strong) solution of \eqref{eq:SDEmain} if the corresponding integral equation
\begin{equation*}
X(t) = x_{0} + \int_{0}^{t} f(X(s)) \diff s + \int_{0}^{t} g(X(s)) \diff B(s)
\end{equation*}
is satisfied, almost surely, for every $t \in [0,T]$, where the second integral is an Itô integral. Naturally, the above definition requires that the involved integrals are well-defined. We refer the interested reader to \cite{MR2001996} for details on well-posedness of (strong) solutions of SDEs with Lipschitz continuous coefficients. The well-posedness of (strong) solutions to \eqref{eq:SDEmain} follows from the well-posedness of (strong) solutions $Y$ to the SDE in~\eqref{eq:SDE-Lamp} with drift coefficient of class $C^{2}$ with bounded derivatives and additive diffusion coefficient, $X = \Phi^{-1}(Y)$ (see equations~\eqref{eq:LampTrans} and~\eqref{eq:SDE-Lamp}), and the above assumptions.

\subsection{Boundary classification}\label{sec:BB}
We dedicate this section to a short discussion about whether or not the process $X$ can hit the boundary points $\partial D$, where the Lamperti transform $\Phi$ is not well-defined. The boundary points are unattainable by $X$ if and only if the stopping time
\begin{equation*}
\tau = \inf \{ t \in (0,T]:\ X(t) \in \partial D \}
\end{equation*}
is infinite almost surely. In the considered setting, we have the following characterisation of when $\tau = \infty$ almost surely:
\begin{equation*}
\PP( \tau = \infty) = 1 \Longleftrightarrow \PP(Y \text{ blows up}) = 0.
\end{equation*}
By Proposition~\ref{prop:H}, the drift coefficient $H$ of the transformed process $Y$ is Lipschitz continuous. Thus, $Y$ does not blow up in finite time with probability $1$, and we conclude that $\tau = \infty$ almost surely. Alternatively, Feller's boundary classification provides a general theory on boundary behaviour of solutions to Itô SDEs and characterises this in terms of the drift and diffusion coefficients $f$ and $g$. For a detailed exposition of Feller's boundary classification we refer the interested reader to \cite{Karlin1981ASC}.

\section{A semi-analytic boundary-preserving integrator}\label{sec:scheme}
In this section we present and state the properties of the semi-analytic boundary-preserving integrator for the SDE in~\eqref{eq:SDEmain}. We say semi-analytic in this section because we assume that the nonlinear ODE in equation~\eqref{eq:ODE-Lamp} admits an exact solution, ideally with an analytical formula. See also equation~\eqref{eq:ODE-LampMain} below. In Section~\ref{sec:scheme_v2} we will instead use a numerical method to solve the ODE in~\eqref{eq:ODE-Lamp} and we will there obtain a boundary-preserving integrator for the SDE in~\eqref{eq:SDEmain}.

We partition the interval $[0,T]$ into $M \in \N$ subintervals $[t_m,t_{m+1}]$, each of length $\Delta t=T/M$. This means that $t_{m} = m \Delta t$, for $m=0, \ldots, M$.

We propose a scheme based on utilising the Lamperti transform associated with the considered SDE in equation~\eqref{eq:SDE-Lamp} followed by a Lie--Trotter splitting strategy of the resulting transformed SDE in equation~\eqref{eq:SDE-Lamp}.

We first provide a detailed description of the construction of the scheme in Section~\ref{sec:schemeconstruction}. We then provide the main results of this section in Section~\ref{sec:results}; that is, the boundary-preserving property of the scheme (Proposition~\ref{propo:BP-scheme}) and the $L^{p}(\Omega)$-convergence of order $1$ for every $p \geq 1$ (Theorem~\ref{theo:main_v2}). As a corollary, we also obtain almost sure pathwise convergence of order $1-\epsilon$ for every $\epsilon>0$ (Corollary~\ref{cor:as_conv}). 

\subsection{Description of the integrator}\label{sec:schemeconstruction}
In the following we describe how the semi-analytic Lamperti-splitting (LS) scheme is constructed. We consider the SDE obtained by the transformation $Y(t) = \Phi(X(t))$, i.e. equation~\eqref{eq:SDE-Lamp},
\begin{equation}\label{eq:SDE-LampMain}
\left\lbrace
\begin{aligned}
& \diff Y(t) = \left( H(Y(t)) + \mu \right) \diff t + \diff B(t),\\ 
& Y(0) = \Phi(x_{0}),
\end{aligned}
\right.
\end{equation}
using Itô's lemma, where $H$ and $\mu$ are defined in Section~\ref{sec:setting}. We construct an approximation $Y^{LS}$ of the solution $Y$ to the SDE in equation~\eqref{eq:SDE-LampMain} and define the approximation $X^{LS}$ of the solution $X$ to the original SDE in equation~\eqref{eq:SDEmain} as $X^{LS} = \Phi^{-1}(Y^{LS})$. We construct the semi-analytic LS scheme on the time grid points $0 = t_{0} < \ldots < t_{M} = T$, denoted by $X^{LS}_{m}$ and $Y^{LS}_{m}$ for $m=0, \ldots, M$. We let $Y^{LS}_{0} = \Phi(x_{0})$ and we define $Y^{LS}_{m}$, for $m=1, \ldots, M$, recursively as follows: Suppose $Y^{LS}_{m}$ at time $t_m=m\Delta t$ is given. First we let $y_{m}$ solve the nonlinear ODE
\begin{equation}\label{eq:ODE-LampMain}
\left\lbrace
\begin{aligned}
& \frac{\diff y_{m}(t)}{\diff t} = H(y_{m}(t)),\\ 
& y_{m}(t_{m}) = Y^{LS}_{m},
\end{aligned}
\right.
\end{equation}
on the interval $[t_{m},t_{m+1}]$ with initial value $Y^{LS}_{m}$. Second we let $Z_{m}$ solve the SDE for Brownian motion with drift $\mu \in \mathbb{R}$
\begin{equation}\label{eq:SDE-main}
\left\lbrace
\begin{aligned}
& \diff Z_{m}(t) = \mu \diff t + \diff B(t),\\ 
& Z_{m}(t_{m}) = y_{m}(t_{m+1}),
\end{aligned}
\right.
\end{equation}
on the interval $[t_{m},t_{m+1}]$ with initial value $y_{m}(t_{m+1})$.  We define $Y^{LS}_{m+1}$ at the next time grid point $t_{m+1} = (m+1) \Delta t$ as
\begin{equation}\label{eq:LampSplitDisc}
Y^{LS}_{m+1} = Z_{m}(t_{m+1}) \equiv y_{m}(t_{m+1}) + \mu \Delta t + B(t_{m+1}) - B(t_{m})
\end{equation}
and we define the approximation $X^{LS}_{m+1} = \Phi^{-1}(Y^{LS}_{m+1})$ of the solution of equation~\eqref{eq:SDEmain} at the time grid point $t_{m+1}$.

\begin{remark}
The same scheme could have been constructed by considering the opposite order of the splitting; that is, first solve the SDE in equation~\eqref{eq:SDE-main} and then solve the ODE in equation~\eqref{eq:ODE-LampMain}. Then we would directly obtain $L^{2}(\Omega)$-convergence of order $1$ by applying Theorem $3.1$ in \cite{MR2341800}. However, Theorem $3.1$ in \cite{MR2341800} does not yield the stronger results of $L^{p}(\Omega)$-convergence rate, for every $p \geq 1$, with the supremum inside the expected value: see Theorem~\ref{theo:main_v2} below.
\end{remark}
For the proof of the convergence result in this section (see Theorem~\ref{theo:main_v2}), the following integral expression for $Y^{LS}_{m+1}$, for $m=0, \ldots, M-1$, will be used
\begin{equation}\label{eq:LampSplitInt}
Y^{LS}_{m+1} = Y^{LS}_{m} + \int_{t_{m}}^{t_{m+1}} H(y_{m}(s)) \diff s + \mu \Delta t + B(t_{m+1}) - B(t_{m}).
\end{equation}

The definition of $X^{LS}$ as $\Phi^{-1}(Y^{LS})$ combined with Assumptions~\ref{ass:f},~\ref{ass:g} and~\ref{ass:fg} guarantee that the scheme is boundary-preserving. This is the content of the next proposition.

\begin{proposition}\label{propo:BP-scheme}
Let $M\in\N$, $T > 0$, $\Delta t = T/M$ and let $x_{0} \in \mathring{D}$.
Suppose that Assumptions~\ref{ass:f},~\ref{ass:g} and~\ref{ass:fg} are satisfied.
Let $Y^{LS}$ be given by the splitting scheme in equation~\eqref{eq:LampSplitDisc} and let $X^{ LS} = \Phi^{-1}(Y^{LS})$ be the numerical approximation of the original SDE in~\eqref{eq:SDEmain}. Then
\begin{equation*}
X^{LS}_{m} \in \mathring{D},
\end{equation*}
almost surely, for every $m \in \{0,\ldots, M\}$.
\end{proposition}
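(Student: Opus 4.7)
The plan is to argue by induction on $m$, using the two structural facts already established: by Proposition~\ref{prop:phiinv}, the map $\Phi^{-1}\colon \mathbb{R}\to\mathring{D}$ takes values strictly inside $\mathring{D}$, and by Proposition~\ref{prop:H}, the ODE driver $H$ is globally Lipschitz on $\mathbb{R}$. So the only thing to verify is that at each step the quantity fed into $\Phi^{-1}$ is a finite real number; boundary-preservation of $X^{LS}_m$ is then automatic.

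For the base case, $X^{LS}_0 = x_0 \in \mathring{D}$ by assumption, so $Y^{LS}_0 = \Phi(x_0)$ is a well-defined real number. For the inductive step, I assume $Y^{LS}_m \in \mathbb{R}$ (equivalently, $X^{LS}_m \in \mathring{D}$) almost surely. I then invoke Proposition~\ref{prop:H}: since $H$ is globally Lipschitz on all of $\mathbb{R}$, the ODE in~\eqref{eq:ODE-LampMain} with initial datum $Y^{LS}_m$ at time $t_m$ admits a unique global solution on $[t_m,t_{m+1}]$ by the Picard--Lindelöf theorem, and in particular $y_m(t_{m+1})$ is a well-defined, finite real-valued random variable. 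Adding $\mu \Delta t + B(t_{m+1}) - B(t_m)$, which is finite almost surely, then yields $Y^{LS}_{m+1} \in \mathbb{R}$ almost surely by the recursion~\eqref{eq:LampSplitDisc}.

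Finally, since $Y^{LS}_{m+1}$ is real-valued a.s., Proposition~\ref{prop:phiinv} gives $X^{LS}_{m+1} = \Phi^{-1}(Y^{LS}_{m+1}) \in \mathring{D}$ almost surely, closing the induction. There is no serious obstacle: the argument is essentially a bookkeeping check that the construction never pushes us into the singular set of $\Phi$, and it is precisely the global Lipschitz property of $H$ (itself the non-trivial output of Assumptions~\ref{ass:f},~\ref{ass:g},~\ref{ass:fg} via Proposition~\ref{prop:H}) together with the surjectivity of $\Phi$ onto $\mathbb{R}$ that makes the induction go through. If anything, the one place to be slightly careful is the almost-sure qualifier: the null set on which $B(t_{m+1})-B(t_m)$ fails to be finite is empty, so a single null set handles all $M$ inductive steps at once.
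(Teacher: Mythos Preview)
Your proof is correct and follows essentially the same approach as the paper: reduce the claim to showing each $Y^{LS}_m$ is almost surely a finite real number, argue this by induction using the global well-posedness of the ODE~\eqref{eq:ODE-LampMain} (via the Lipschitz property of $H$ from Proposition~\ref{prop:H}) and the almost-sure finiteness of the Brownian increment, then apply Proposition~\ref{prop:phiinv} to conclude $X^{LS}_{m}=\Phi^{-1}(Y^{LS}_{m})\in\mathring{D}$. Your version is a bit more explicit (naming Picard--Lindel\"of and tracking the null sets), but the structure and the key ingredients are identical.
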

\begin{proof}[Proof of Proposition~\ref{propo:BP-scheme}]
Recall that $\Phi^{-1}: \mathbb{R} \to \mathring{D}$ is bijective and continuous, by Proposition~\ref{prop:phiinv}. Thus, if $Y^{LS}_{m}$, for $m = 0,\ldots, M$, does not blow up, with probability $1$, then the statement holds. Suppose that $Y^{LS}_{m}$ is finite with probability $1$ for some $m = 0, \ldots M-1$. Then $y_{m}(t_{m+1})$, in equation~\eqref{eq:ODE-LampMain}, is finite with probability $1$ as it is the solution of the globally well-defined ODE in~\eqref{eq:ODE-LampMain} at time $t_{m+1}$. Since $Z_{m}(t)$ in equation~\eqref{eq:SDE-main}, for $t \in [t_{m},t_{m+1}]$, is a Brownian motion with drift $\mu$ and unit diffusion starting at $y_{m+1}(t_{m+1})$, we have that $Y^{LS}_{m+1} = Z_{m}(t_{m+1})$ is finite with probability $1$. This gives the desired result.
\end{proof}

\subsection{Convergence result}\label{sec:results}
In the following we prove that the proposed semi-analytic LS scheme has $L^{p}(\Omega)$-convergence of order $1$, for every $p \geq 1$, for the considered SDE in equation~\eqref{eq:SDEmain}.

We would like to make a few remarks and comparisons to other related papers before we state and prove the convergence result.
\begin{remark}
The positive moments of the exact solution and of the LS approximation are immediately bounded as both are confined to the bounded domain $\mathring{D}$. 
\end{remark}
\begin{remark}
In contrast to the papers \cite{MR4220738, MR4274899, Yang2021FirstOS, MR4268206}, we do not need to bound the inverse moments of the exact solution and of the LS approximation to obtain our convergence result.
\end{remark}

\begin{theorem}\label{theo:main_v2}
Let $M\in\N$, $T > 0$, $\Delta t = T/M$ and let $x_{0} \in \mathring{D}$.
Suppose Assumptions~\ref{ass:f},~\ref{ass:g} and~\ref{ass:fg} are satisfied. Let $X^{LS} = \Phi^{-1}(Y^{LS})$, where $Y^{LS}$ is defined by the splitting scheme in equation~\eqref{eq:LampSplitDisc}, and let $X$ be the exact solution of the considered SDE in equation~\eqref{eq:SDEmain}. Then, for every $p \geq 1$, it holds
\begin{equation*}
\left( \E \left[ \sup_{m=0, \ldots, M} | X^{LS}_{m} - X(t_{m}) |^{p} \right] \right)^{\frac{1}{p}} \leq C(p,\Phi^{-1},H,T,\mu) \Delta t.
\end{equation*}
\end{theorem}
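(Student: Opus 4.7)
The plan is to reduce to the transformed process and analyse the recursion for the error $e_m = Y^{LS}_m - Y(t_m)$. Since $\Phi^{-1}: \mathbb{R} \to \mathring{D}$ is globally Lipschitz by Proposition~\ref{prop:phiinv}, it suffices to establish $\| \sup_{m \leq M} |e_m| \|_{L^p(\Omega)} \leq C \Delta t$. Subtracting the integral form of $Y$ from equation~\eqref{eq:LampSplitInt}, the Brownian increments $B(t_{m+1}) - B(t_m)$ and the $\mu \Delta t$ terms cancel identically, leaving the purely bounded-variation recursion
\begin{equation*}
e_{m+1} = e_m + \Delta t \, [H(Y^{LS}_m) - H(Y(t_m))] + R^{(1)}_m + R^{(2)}_m,
\end{equation*}
obtained by adding and subtracting $H(Y^{LS}_m)$ and $H(Y(t_m))$ inside the difference $H(y_m(s)) - H(Y(s))$, with $R^{(1)}_m = \int_{t_m}^{t_{m+1}} [H(y_m(s)) - H(Y^{LS}_m)] \diff s$ (ODE-side) and $R^{(2)}_m = \int_{t_m}^{t_{m+1}} [H(Y(t_m)) - H(Y(s))] \diff s$ (SDE-side).

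The ODE-side local error is routine: by Proposition~\ref{prop:H} the ODE velocity is bounded, so $|y_m(s) - Y^{LS}_m| \leq (L_H + |\mu|)(s - t_m)$, whence $|R^{(1)}_m| \leq C \Delta t^2$ almost surely. The SDE-side error is the delicate part, since the naive pointwise estimate $|Y(s) - Y(t_m)| \leq C(s-t_m) + |B(s) - B(t_m)|$ would only give $\|R^{(2)}_m\|_{L^p(\Omega)} \leq C \Delta t^{3/2}$; summed over $m$ via the triangle inequality this would yield only order $\Delta t^{1/2}$. The remedy is to apply It\^o's formula to $H(Y)$, justified because $H \in C^2$ with bounded derivatives by Proposition~\ref{prop:H}, and then stochastic Fubini, to obtain
\begin{equation*}
R^{(2)}_m = -\int_{t_m}^{t_{m+1}} \!\! \int_{t_m}^s \!\! \Bigl( H'(Y(u))(H(Y(u))+\mu) + \tfrac{1}{2} H''(Y(u)) \Bigr) \diff u \, \diff s - \int_{t_m}^{t_{m+1}} (t_{m+1}-u) H'(Y(u)) \diff B(u).
\end{equation*}
Boundedness of $H+\mu$, $H'$ and $H''$ ensures that the double deterministic integral is bounded by $C \Delta t^2$ almost surely, while the It\^o integral is a centred, $\mathcal{F}_{t_{m+1}}$-measurable martingale increment.

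Denote the accumulated It\^o increments by $N_m = -\sum_{k<m} \int_{t_k}^{t_{k+1}} (t_{k+1}-u) H'(Y(u)) \diff B(u)$, which equals a single It\^o integral up to time $t_m$ of a bounded piecewise weight, and collect the remaining parts with deterministic bounds into $\rho_m$ with $|\rho_m| \leq C \Delta t^2$. The shifted error $u_m = e_m + N_m$ then satisfies $u_{m+1} = u_m + \Delta t \, [H(Y^{LS}_m) - H(Y(t_m))] + \rho_m$, and using $L_H$-Lipschitz continuity of $H$ together with $|e_m| \leq |u_m| + |N_m|$, a discrete Gronwall argument yields
\begin{equation*}
\sup_{m \leq M} |u_m| \leq e^{L_H T} \Bigl( L_H T \sup_{m \leq M} |N_m| + C T \Delta t \Bigr),
\end{equation*}
and hence $\sup_m |e_m| \leq C \Delta t + C' \sup_m |N_m|$. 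The quadratic variation $\langle N \rangle_T$ is deterministically bounded by $L_H^2 \sum_k \int_{t_k}^{t_{k+1}} (t_{k+1}-u)^2 \diff u = L_H^2 T \Delta t^2 / 3$, so the Burkholder--Davis--Gundy inequality gives $\| \sup_{m \leq M} |N_m| \|_{L^p(\Omega)} \leq C_p \Delta t$, completing the bound. The main obstacle is precisely this $L^p$-analysis of the accumulated SDE-side local error: a naive triangle-inequality estimate loses a factor $\Delta t^{1/2}$, so it is essential to expose the single It\^o-integral structure via It\^o's formula and stochastic Fubini before invoking BDG.
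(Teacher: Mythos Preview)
Your proof is correct and follows essentially the same strategy as the paper's: reduce via Lipschitz continuity of $\Phi^{-1}$, split $H(y_m(s)) - H(Y(s))$ into the same three pieces, apply It\^o's formula to the SDE-side term, and control the accumulated stochastic integral via stochastic Fubini and BDG before closing with a discrete Gr\"onwall argument. The only cosmetic difference is that you subtract off the martingale contribution $N_m$ and run Gr\"onwall on the shifted error $u_m = e_m + N_m$, whereas the paper keeps the stochastic term as part of the random ``constant'' $\eta_{\Delta t}$ in the Gr\"onwall bound and then estimates its $L^p$ norm afterwards; both arrangements yield the same estimate.
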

The dependence of the constant $C(p,\Phi^{-1},H,T,\mu)$ in Theorem~\Ref{theo:main_v2} on the parameters can be found in the proof.
\begin{proof}[Proof of Theorem~\ref{theo:main_v2}]
As $X^{LS}_{0} = X(0)$, it suffices to consider the case $m \in \{ 1, \ldots, M \}$. Since $\Phi^{-1}$ is globally Lipschitz continuous by Proposition~\ref{prop:phiinv}, the proof reduces to proving the corresponding $L^{p}(\Omega)$-estimate for $|Y^{LS}_{m}-Y(t_{m})|$:
\begin{equation*}
| X^{LS}_{m} - X(t_{m}) | = | \Phi^{-1}(Y^{LS}_{m}) - \Phi^{-1}(Y(t_{m})) | \leq L_{\Phi^{-1}} | Y^{LS}_{m} - Y(t_{m}) |.
\end{equation*}
Recall the following integral expression for $Y^{LS}_{m}$ stated in equation~\eqref{eq:LampSplitInt} in Section~\ref{sec:schemeconstruction}
\begin{equation*}
  \begin{split}
    Y^{LS}_{m} &= Y^{LS}_{m-1} + \int_{t_{m-1}}^{t_{m}} H(y_{m-1}(s)) \diff s + \mu \Delta t + B(t_{m}) - B(t_{m-1}) \\
    &= Y^{LS}_{m-1} + \int_{t_{m-1}}^{t_{m}} \left( H(y_{m-1}(s)) + \mu \right) \diff s + B(t_{m}) - B(t_{m-1}).
  \end{split}
\end{equation*}
By recursively applying the above formula, we obtain that
\begin{equation}\label{eq:expressYLS}
Y^{LS}_{m} = Y(0) + \sum_{k=0}^{m-1} \int_{t_{k}}^{t_{k+1}} \left( H(y_{k}(s)) + \mu \right) \diff s + B(t_{m})
\end{equation}
and we re-write the integral representation of $Y$ on a similar form
\begin{equation*}
  \begin{split}
    Y(t_{m}) &= Y(0) + \int^{t_{m}}_{0} \left( H(Y(s)) + \mu) \right) \diff s + B(t_{m}) \\
    &= Y(0) + \sum_{k=0}^{m-1} \int_{t_{k}}^{t_{k+1}} \left( H(Y(s)) + \mu \right) \diff s + B(t_{m})
  \end{split}
\end{equation*}
at $t=t_{m}$. Therefore
\begin{equation}\label{eq:LSerr1}
|Y^{LS}_{m}-Y(t_{m})| \leq \left| \sum_{k=0}^{m-1} \int_{t_{k}}^{t_{k+1}} H \left(y_{k}(s)\right) - H \left(Y(s) \right) \diff s \right|,
\end{equation}
We first split the integral as follows
\begin{equation}\label{eq:err1}
\begin{split}
\int_{t_{k}}^{t_{k+1}} H \left(y_{k}(s) \right) - H \left(Y(s) \right) \diff s &= \int_{t_{k}}^{t_{k+1}} H \left(y_{k}(s) \right) - H \left(Y^{LS}_{k} \right) \diff s \\ &+ \int_{t_{k}}^{t_{k+1}} H \left( Y^{LS}_{k} \right) - H \left( Y(t_{k})\right) \diff s \\ &+ \int_{t_{k}}^{t_{k+1}} H \left( Y(t_{k}) \right) - H \left( Y(s) \right) \diff s
\end{split}
\end{equation}
and consider each term separately. Observe that the second term on the right hand side of equation~\eqref{eq:err1} is the term that we need for Grönwall's lemma after the estimate and simplification
\begin{equation*}
\begin{split}
\left| \int_{t_{k}}^{t_{k+1}} H \left( Y^{LS}_{k} \right) - H \left( Y(t_{k}) \right) \diff s \right| &\leq L_{H} \int_{t_{k}}^{t_{k+1}} |Y^{LS}_{k} - Y(t_{k})| \diff s \\ &= L_{H} \Delta t |Y^{LS}_{k} - Y(t_{k})|,
\end{split}
\end{equation*}
where we used that $H$ is globally Lipschitz continuous with Lipschitz constant bounded by $L_{H}$ by Proposition~\ref{prop:H}. Recall now that $y_{k}(s) = Y^{LS}_{k} + \int_{t_{k}}^{s} H(y_{k}(r)) \diff r$ is the exact solution to the ODE in equation~\eqref{eq:ODE-LampMain} starting at $t_{k}$ with initial value $Y^{LS}_{k}$. Therefore, the first term on the right hand side in equation~\eqref{eq:err1} can be estimated by
\begin{equation*}
\begin{split}
\left| \int_{t_{k}}^{t_{k+1}} H \left( y_{k}(s) \right) -H \left( Y^{LS}_{k} \right) \diff s \right| &\leq L_{H} \int_{t_{k}}^{t_{k+1}} | y_{k}(s)-Y^{LS}_{k}| \diff s \\ &\leq L_{H} \int_{t_{k}}^{t_{k+1}} \int_{t_{k}}^{s} |H(y_{k}(r))| \diff r \diff s \\ &\leq L_{H} \left( L_{H} + |\mu| \right) \Delta t^{2},
\end{split}
\end{equation*}
since $H$ is globally Lipschitz continuous with Lipschitz constant bounded by $L_{H}$ and uniformly bounded by $L_{H}+|\mu|$ by Proposition~\ref{prop:H}. It remains to consider the third term on the right hand side in equation~\eqref{eq:err1}
\begin{equation*}
\int_{t_{k}}^{t_{k+1}} H \left( Y(t_{k}) \right) - H \left( Y(s) \right) \diff s.
\end{equation*}
By Proposition~\ref{prop:H}, $H \in C^{2} \left( \mathbb{R} \right)$ and we can thus apply Itô's formula
\begin{equation*}
\begin{split}
H(Y(s)) = H(Y(t_{k})) + \int_{t_{k}}^{s} H(Y(r)) H'(Y(r)) \diff r &+ \frac{1}{2} \int_{t_{k}}^{s} H''(Y(r))  \diff r \\ &+ \int_{t_{k}}^{s} H'(Y(r)) \diff B(r)
\end{split}
\end{equation*}
to see that
\begin{equation}\label{eq:Hito1}
\begin{split}
\int_{t_{k}}^{t_{k+1}} H \left( Y(t_{k}) \right) - H \left( Y(s) \right) \diff s  &= \int_{t_{k}}^{t_{k+1}} \int_{t_{k}}^{s} H(Y(r)) H'(Y(r)) \diff r \diff s \\ &+ \frac{1}{2} \int_{t_{k+1}}^{t_{k+1}} \int_{t_{k}}^{s} H''(Y(r)) \diff r \diff s \\ &+ \int_{t_{k}}^{t_{k+1}} \int_{t_{k}}^{s} H'(Y(r)) \diff B(r) \diff s.
\end{split}
\end{equation}
To obtain convergence order $1$, we have to sum over $k=0,\ldots,m-1$ before estimating the separate terms in equation~\eqref{eq:Hito1}
\begin{equation}\label{eq:Hito21}
\begin{split}
\sum_{k=0}^{m-1} \int_{t_{k}}^{t_{k+1}} H \left( Y(t_{k}) \right) - H \left( Y(s) \right) \diff s  &= \int_{0}^{t_{m}} \int_{\ell(s)}^{s} H(Y(r)) H'(Y(r)) \diff r \diff s \\ &+ \frac{1}{2} \int_{0}^{t_{m}} \int_{\ell(s)}^{s} H''(Y(r)) \diff r \diff s \\ &+ \int_{0}^{t_{m}} \int_{\ell(s)}^{s} H'(Y(r)) \diff B(r) \diff s,
\end{split}
\end{equation}
where we let $\ell(s) = t_{k}$ whenever $s \in [t_{k},t_{k+1})$. The first two terms on the right hand side of equation~\eqref{eq:Hito21} can estimated by
\begin{equation*}
\begin{split}
\left| \int_{0}^{t_{m}} \int_{\ell(s)}^{s} H(Y(r)) H'(Y(r)) \diff r \diff s \right| &\leq \int_{0}^{t_{m}} \int_{\ell(s)}^{s} | H(Y(r)) H'(Y(r))| \diff r \diff s \\ &\leq \left( L_{H} + |\mu| \right) L_{H} t_{m} \Delta t
\end{split}
\end{equation*}
and
\begin{equation*}
\left| \frac{1}{2} \int_{0}^{t_{m}} \int_{\ell(s)}^{s} H''(Y(r)) \diff r \diff s \right| \leq \frac{1}{2} \int_{0}^{t_{m}} \int_{\ell(s)}^{s} |H''(Y(r)) | \diff r \diff s \leq \frac{1}{2} L_{H} t_{m} \Delta t,
\end{equation*}
where we used that $H$ and $H',H''$ are uniformly bounded by $L_{H} +|\mu|$ and $L_{H}$, respectively, by Proposition~\ref{prop:H}. The third term on the right hand side of equation~\eqref{eq:Hito21} is a random variable with finite $p$th moments for every $p \geq 1$, see Appendix~\ref{sec:appA} for a proof. Inserting everything back into equation~\eqref{eq:LSerr1} and estimating $m \Delta t = t_{m} \leq T$ gives us
\begin{equation*}
\begin{split}
|Y^{LS}_{m}-Y(t_{m})| &\leq \left( L_{H} + |\mu| \right) L_{H} T \Delta t + \left( L_{H} + |\mu| \right) L_{H} T \Delta t + \frac{1}{2} L_{H} T \Delta t \\ &+ \left| \int_{0}^{T} \int_{\ell(s)}^{s} H'(Y(r)) \diff B(r) \diff s \right| + L_{H} \Delta t \sum_{k=0}^{m-1} |Y^{LS}_{k} - Y(t_{k})|,
\end{split}
\end{equation*}
where we have tried to group the terms to make it clear where the terms come from. An application of a discrete Grönwall's lemma now yields
\begin{equation*}
\sup_{m=0,\ldots,M}|Y^{LS}_{m}-Y(t_{m})| \leq \Delta t \eta_{\Delta t} e^{L_{H} T}
\end{equation*}
where $\eta_{\Delta t}$ is the random variable
\begin{equation}\label{eq:eta}
\eta_{\Delta t} = 2 \left( L_{H} + |\mu| \right) L_{H} T + \frac{1}{2} L_{H} T + \left| \frac{1}{\Delta t} \int_{0}^{T} \int_{\ell(s)}^{s} H'(Y(r)) \diff B(r) \diff s \right|.
\end{equation}
Observe that the random variable $\eta_{\Delta t}$ is not finite almost sure uniformly in $\Delta t$. However, $\Delta t \eta_{\Delta t}$ is finite almost surely uniformly in $\Delta t$. To finish the proof we need to make sure that $\eta_{\Delta t}$ has finite $p$th moments for every $p \geq 1$, since then
\begin{equation*}
\left( \E \left[ \sup_{m=0,\ldots,M}|Y^{LS}_{m}-Y(t_{m})|^{p} \right] \right)^{\frac{1}{p}} \leq \Delta t \left( \E \left[ \eta_{\Delta t}^{p} \right] \right)^{\frac{1}{p}} e^{L_{H} T} = C(p,H,T,\mu) \Delta t.
\end{equation*}
The only random term in the definition of $\eta_{\Delta t}$ in equation~\eqref{eq:eta} is the third. An application of stochastic Fubini's theorem and the Burkholder-Davis-Gundy (BDG) inequality gives us that the third term indeed has finite $p$th moments
\begin{equation*}
\E \left[ \left| \frac{1}{\Delta t} \int_{0}^{T} \int_{\ell(s)}^{s} H'(Y(r)) \diff B(r) \diff s \right|^{p} \right] \leq C(p) L_{H} T^{p/2},
\end{equation*}
where the constant $C(p)$ here is the constant in the BDG inequality. We provide the proof of the above $p$th moment estimates in Appendix~\ref{sec:appA}. This concludes the proof.
\end{proof}

By applying Lemma~$2.1$ in \cite{MR2320830}, we obtain, as a corollary to Theorem~\ref{theo:main_v2}, almost sure pathwise convergence with rate $1-\epsilon$ for every $\epsilon>0$. 
\begin{corollary}\label{cor:as_conv}
Under the same assumptions and notation as in Theorem~\ref{theo:main_v2}, there exists for every $\epsilon>0$ a random variable $\zeta_{\epsilon}$, with $\E \left[ |\zeta_{\epsilon}|^{p} \right] < \infty$ for every $p \geq 1$, such that
\begin{equation*}
\sup_{m=0, \ldots, M} |X^{LS}_{m}-X(t_{m})| \leq \zeta_{\epsilon} \Delta t^{1 - \epsilon}
\end{equation*}
almost surely.
\end{corollary}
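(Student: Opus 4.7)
The plan is to apply the Kloeden--Neuenkirch type criterion from Lemma~2.1 of~\cite{MR2320830} directly to the strong error estimate established in Theorem~\ref{theo:main_v2}, turning the uniform-in-$p$ control of all moments of the error into an almost sure rate that is arbitrarily close to $1$.

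More precisely, I would define, for each $M \in \N$ (with $\Delta t = T/M$), the non-negative random variable
\begin{equation*}
Z_{M} = \sup_{m=0,\ldots,M} |X^{LS}_{m} - X(t_{m})|.
\end{equation*}
Theorem~\ref{theo:main_v2} supplies the bound $\left( \E[Z_{M}^{p}] \right)^{1/p} \leq C(p,\Phi^{-1},H,T,\mu)\, \Delta t = C(p,\Phi^{-1},H,T,\mu)\, T\, M^{-1}$, and this holds for every $p \geq 1$. Thus the sequence $(Z_M)_{M \in \N}$ satisfies exactly the moment hypothesis of Lemma~2.1 in~\cite{MR2320830} with polynomial rate $\alpha=1$: the $L^{p}(\Omega)$-norms decay like $M^{-1}$ for all $p$.

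Next I would invoke that lemma. It asserts that whenever a sequence of non-negative random variables has $L^{p}(\Omega)$-norm of order $M^{-\alpha}$ for every $p \geq 1$, one may lose an arbitrarily small exponent $\epsilon>0$ and gain an almost sure bound: there is a random variable $\zeta_{\epsilon}$, possessing finite moments of all orders, such that $Z_{M} \leq \zeta_{\epsilon} M^{-(\alpha-\epsilon)}$ almost surely for every $M$. Applied with $\alpha=1$ and rewriting $M^{-(1-\epsilon)} = T^{-(1-\epsilon)} \Delta t^{1-\epsilon}$, and absorbing the factor $T^{-(1-\epsilon)}$ into the random variable, this gives the claimed estimate $\sup_{m}|X^{LS}_{m} - X(t_{m})| \leq \zeta_{\epsilon} \Delta t^{1-\epsilon}$ almost surely, with $\E[|\zeta_{\epsilon}|^{p}] < \infty$ for every $p \geq 1$.

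There is no real obstacle here beyond checking that the hypotheses of the cited lemma are met uniformly in $M$; since Theorem~\ref{theo:main_v2} gives the required estimate for \emph{all} $p \geq 1$ with a constant independent of $M$, the application is immediate and the corollary follows.
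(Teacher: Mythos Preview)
Your proposal is correct and follows exactly the same route as the paper: the paper simply cites Lemma~2.1 of~\cite{MR2320830} applied to the $L^{p}(\Omega)$-estimate of Theorem~\ref{theo:main_v2}, and you have spelled out precisely that application. There is nothing to add.
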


\section{A boundary-preserving integrator}\label{sec:scheme_v2}
Here we extend the results in the previous section to the case where we do not assume that we can integrate the nonlinear ODE in~\eqref{eq:ODE-LampMain} exactly. More precisely, instead of assuming that there exists an explicit solution to the nonlinear ODE in~\eqref{eq:ODE-Lamp} we assume that we can integrate it using some deterministic integrator with local error of order $\Delta t^{2}$.
\begin{assumption}\label{ass:localInt}
For any $y_{0} \in \mathbb{R}$, we can find an approximation $\hat{y}$ to the exact solution $y$ of
\begin{equation}\label{eq:ODE-LampMain2}
\left\lbrace
\begin{aligned}
& \frac{\diff y(t)}{\diff t} = H(y(t)),\\ 
& y(0) = y_{0},
\end{aligned}
\right.
\end{equation}
with local error of order $\Delta t^{2}$ on the interval $[0,\Delta t]$; that is,
\begin{equation*}
\sup_{t \in [0,\Delta t]} |\hat{y}(t) - y(t)| \leq K \sqrt{1 + |y_{0}|^{2}} \Delta t^{2}
\end{equation*}
for some parameter $K>0$.
\end{assumption}
Observe that the classical Euler scheme and the exact solution $y$ both satisfies Assumption~\ref{ass:localInt}. The latter means that the setting in Section~\ref{sec:scheme} also fits into the setting of the current section. At this point, we also mention that we in this section for the convergence result in Theorem~\ref{theo:main_v3} will impose a time-step restriction; that is, we will assume that there exists $\gamma>0$ such that $K T \Delta t \leq \gamma$. Observe that this is in contrast to Section~\ref{sec:scheme} where no time-step restriction was imposed. The main reason for the time-step restriction in this section is to obtain estimates where the only dependence on $\Delta t$ is in the convergence rate. We only use the time-step restriction in the concluding lines of the proofs.

\subsection{Description of the integrator}\label{sec:schemeconstruction_v2}
Let $\hat{Y}^{LS}_{0} = \Phi(x_{0})$ and suppose that the approximation $\hat{Y}^{LS}_{m}$ at time point $t_{m} = m \Delta t$ is given. We let $\hat{y}_{m}(t)$, for $t \in [t_{m},t_{m+1}]$, be the approximate solution given in Assumption~\ref{ass:localInt} to equation~\eqref{eq:ODE-LampMain2} starting at $t_{m}$ with initial value $\hat{Y}^{LT}_{m}$ and we let the scheme at the next time point $t_{m+1} = (m+1) \Delta t$ be defined by
\begin{equation}\label{eq:LampSplitDisc2}
\hat{Y}^{LS}_{m+1} = \hat{Z}_{m}(t_{m+1}) \equiv \hat{y}_{m}(t_{m+1}) + \mu \Delta t + B(t_{m+1}) - B(t_{m})
\end{equation}
where $\hat{Z}_{m}(t)$, for $t \in [t_{m},t_{m+1}]$, solves
\begin{equation}\label{eq:SDE-main2}
\left\lbrace
\begin{aligned}
& \diff \hat{Z}_{m}(t) = \mu \diff t + \diff B(t),\\ 
& \hat{Z}_{m}(t_{m}) = \hat{y}_{m}(t_{m+1}).
\end{aligned}
\right.
\end{equation}
We define $\hat{X}^{LS}_{m+1} = \Phi^{-1}\left(\hat{Y}^{LS}_{m+1} \right)$ as the approximation of the solution of equation~\eqref{eq:SDEmain} at the time point $t_{m+1}$. We remark that the definition~\eqref{eq:LampSplitDisc2} implies the following representation formula for $\hat{Y}^{LS}$
\begin{equation}\label{eq:expresshatYLS}
\begin{split}
\hat{Y}^{LS}_{m} &= Y(0) + \sum_{k=0}^{m-1} \left( \hat{y}_{k}(t_{k+1}) - \tilde{y}_{k}(t_{k+1}) \right) \\ &+ \sum_{k=0}^{m-1} \int^{t_{k+1}}_{t_{k}} \left( H(\tilde{y}_{k}(s)) + \mu \right) \diff s + B(t_{m})
\end{split}
\end{equation}
where $\tilde{y}_{k}(t)$, for $t \in [t_{k},t_{k+1}]$, is the exact solution to the nonlinear ODE in equation~\eqref{eq:ODE-LampMain2} starting at $t_{k}$ with initial value $\hat{Y}^{LS}_{k}$. Compare the formula~\eqref{eq:expresshatYLS} to the formula in equation~\eqref{eq:expressYLS}, the additional term in formula~\eqref{eq:expresshatYLS} comes from the deterministic integrator in Assumption~\ref{ass:localInt}. The representation in equation~\eqref{eq:expresshatYLS} follows by repeatedly using equation~\eqref{eq:LampSplitDisc2} as follows: First add and subtract $\tilde{y}_{m}(t_{m+1})$ to equation~\eqref{eq:LampSplitDisc2}
\begin{equation*}
\begin{split}
\hat{Y}^{LS}_{m+1} &= \hat{y}_{m}(t_{m+1}) + \mu \Delta t + B(t_{m+1}) - B(t_{m}) \\ &= \hat{y}_{m}(t_{m+1}) - \tilde{y}_{m}(t_{m+1}) + \mu \Delta t + B(t_{m+1}) - B(t_{m}) + \tilde{y}_{m}(t_{m+1})
\end{split}
\end{equation*}
and then insert that
\begin{equation*}
\tilde{y}_{m}(t_{m+1}) = \hat{Y}^{LS}_{m} + \int_{t_{m}}^{t_{m+1}} H(\tilde{y}_{m}(s)) \diff s
\end{equation*}
to obtain
\begin{equation*}
\begin{split}
\hat{Y}^{LS}_{m+1} &= \hat{Y}^{LS}_{m} + \hat{y}_{m}(t_{m+1}) - \tilde{y}_{m}(t_{m+1}) + \int_{t_{m}}^{t_{m+1}} \left( H(\tilde{y}_{m}(s)) + \mu \right) \diff s \\ &+ B(t_{m+1}) - B(t_{m}).
\end{split}
\end{equation*}
Repeating the above one more time gives
\begin{equation*}
\begin{split}
\hat{Y}^{LS}_{m+1} &= \hat{Y}^{LS}_{m-1} + \hat{y}_{m}(t_{m+1}) - \tilde{y}_{m}(t_{m+1}) + \hat{y}_{m-1}(t_{m}) - \tilde{y}_{m-1}(t_{m}) \\ &+ \int_{t_{m}}^{t_{m+1}} \left( H(\tilde{y}_{m}(s)) + \mu \right) \diff s + \int_{t_{m-1}}^{t_{m}} \left( H(\tilde{y}_{m-1}(s)) + \mu \right) \diff s \\ &+ B(t_{m+1}) - B(t_{m-1})
\end{split}
\end{equation*}
and repeating $m$ times gives the desired representation formula. 

As in Section~\ref{sec:scheme}, the above constructed scheme is boundary-preserving which we state below in Proposition~\ref{propo:BP-scheme_v2}. The proof is the same as the proof of Proposition~\ref{propo:BP-scheme}.
\begin{proposition}\label{propo:BP-scheme_v2}
Let $M\in\N$, $T > 0$, $\Delta t = T/M$ and let $x_{0} \in \mathring{D}$.
Suppose that Assumptions~\ref{ass:f},~\ref{ass:g},~\ref{ass:fg} and~\ref{ass:localInt} are satisfied.
Let $\hat{Y}^{LS}$ be given by the splitting scheme in equation~\eqref{eq:LampSplitDisc} and let $\hat{X}^{LS} = \Phi^{-1}(\hat{Y}^{LS})$ be the numerical approximation of the original SDE in~\eqref{eq:SDEmain}. Then
\begin{equation*}
\hat{X}^{LS}_{m} \in \mathring{D},
\end{equation*}
almost surely, for every $m \in 0,\ldots, M$.
\end{proposition}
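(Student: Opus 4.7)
The plan is to follow the structure of the proof of Proposition~\ref{propo:BP-scheme} and adapt it to the fully discrete setting. The basic strategy is unchanged: by Proposition~\ref{prop:phiinv}, $\Phi^{-1} : \mathbb{R} \to \mathring{D}$ is a bijection onto the interior of the domain, so the membership $\hat{X}^{LS}_{m} \in \mathring{D}$ is equivalent to the statement that $\hat{Y}^{LS}_{m}$ is finite almost surely. It therefore suffices to show, by induction on $m$, that $\hat{Y}^{LS}_{m} \in \mathbb{R}$ almost surely for every $m \in \{0, \ldots, M\}$.

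For the base case, observe that $\hat{Y}^{LS}_{0} = \Phi(x_{0})$ is finite because $x_{0} \in \mathring{D}$ and $\Phi$ is well-defined on $\mathring{D}$ by Assumption~\ref{ass:g}. For the induction step, I would assume that $\hat{Y}^{LS}_{m}$ is finite almost surely and argue that $\hat{Y}^{LS}_{m+1}$ defined via equation~\eqref{eq:LampSplitDisc2} inherits this property. The only step requiring a new argument compared to Proposition~\ref{propo:BP-scheme} is the finiteness of $\hat{y}_{m}(t_{m+1})$, since $\hat{y}_{m}$ is now produced by a numerical integrator rather than the exact flow of the ODE~\eqref{eq:ODE-LampMain2}. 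Here I would use Assumption~\ref{ass:localInt}: letting $\tilde{y}_{m}$ denote the exact solution of~\eqref{eq:ODE-LampMain2} with initial value $\hat{Y}^{LS}_{m}$ at $t_{m}$, Proposition~\ref{prop:H} (global Lipschitz continuity of $H$) ensures that $\tilde{y}_{m}$ is defined globally in time and finite at $t_{m+1}$, while Assumption~\ref{ass:localInt} yields
\begin{equation*}
|\hat{y}_{m}(t_{m+1}) - \tilde{y}_{m}(t_{m+1})| \leq K \sqrt{1 + |\hat{Y}^{LS}_{m}|^{2}} \Delta t^{2},
\end{equation*}
which is almost surely finite by the induction hypothesis. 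Hence $\hat{y}_{m}(t_{m+1})$ is finite almost surely.

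Finally, since $B(t_{m+1}) - B(t_{m})$ is a Gaussian increment and $\mu \Delta t$ is deterministic, the representation $\hat{Y}^{LS}_{m+1} = \hat{y}_{m}(t_{m+1}) + \mu \Delta t + B(t_{m+1}) - B(t_{m})$ is a sum of three almost surely finite quantities, so $\hat{Y}^{LS}_{m+1}$ is finite almost surely. Applying $\Phi^{-1}$ then places $\hat{X}^{LS}_{m+1}$ in $\mathring{D}$ almost surely, closing the induction. I do not anticipate a genuine obstacle: the only substantive addition relative to the semi-analytic case is the use of Assumption~\ref{ass:localInt} to transfer the finiteness of the exact ODE solution to the numerical one, and this transfer is immediate because the local-error bound is linear in $\sqrt{1 + |\hat{Y}^{LS}_{m}|^{2}}$.
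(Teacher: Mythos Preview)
Your proposal is correct and follows the same approach as the paper, which simply states that the proof is identical to that of Proposition~\ref{propo:BP-scheme} without spelling out any details. Your use of Assumption~\ref{ass:localInt} to pass finiteness from the exact ODE solution $\tilde{y}_{m}(t_{m+1})$ to the numerical approximation $\hat{y}_{m}(t_{m+1})$ is the natural way to make this adaptation explicit, and is arguably more careful than the paper's treatment.
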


\subsection{Convergence result}\label{sec:results_v2}
We start with proving that $\sup_{m=0,\ldots,M} |\hat{Y}^{LS}|$ is almost surely bounded. This is the content of the following proposition and is proved by a standard Grönwall argument. Observe that moment bounds of $\sup_{m=0,\ldots,M} |\hat{Y}^{LS}_{m}|$ would have been enough for the proof of the $L^{p}(\Omega)$-convergence in Theorem~\ref{theo:main_v3}. Notice that the moment bounds where not needed in Section~\ref{sec:schemeconstruction} because of the assumption of an exact solution to the ODEs in equation~\eqref{eq:ODE-LampMain}. 
\begin{proposition}\label{prop:LSbound}
Let $M\in\N$, $T > 0$, $\Delta t = T/M$ and let $x_{0} \in \mathring{D}$. Suppose Assumptions~\ref{ass:f},~\ref{ass:g},~\ref{ass:fg} and~\ref{ass:localInt} are satisfied. Let $\hat{Y}^{LS}$ be given by the splitting scheme in equation~\eqref{eq:LampSplitDisc2} with initial value $\hat{Y}^{LS}_{0} = Y(0) = \Phi(x_{0})$. Then it holds
\begin{equation*}
\sup_{m=0,\ldots,M} |\hat{Y}^{LS}_{m}| \leq \left( K T \Delta t + L_{H} T + |Y(0)| + \sup_{t \in [0,T]} |B(t)| \right) e^{K T \Delta t}
\end{equation*}
almost surely.
\end{proposition}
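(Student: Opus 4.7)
The natural starting point is the representation formula \eqref{eq:expresshatYLS}, which expresses $\hat{Y}^{LS}_m$ as the sum of $Y(0)$, the telescoping collection of local ODE errors $\hat{y}_k(t_{k+1})-\tilde{y}_k(t_{k+1})$, the deterministic integral sum involving $H+\mu$, and the Brownian increment telescoped to $B(t_m)$. Taking absolute values and applying the triangle inequality, each of these four pieces can be controlled in a direct way.

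For the integral piece, Proposition~\ref{prop:H} gives $\sup_{x\in\mathbb{R}}|H(x)+\mu|=\sup_{x\in\mathbb{R}}|\tilde{H}(x)|\leq L_H$, so
\begin{equation*}
\Bigl|\sum_{k=0}^{m-1}\int_{t_k}^{t_{k+1}}(H(\tilde{y}_k(s))+\mu)\,\diff s\Bigr|\leq L_H\,t_m\leq L_H T.
\end{equation*}
For the Brownian piece, $|B(t_m)|\leq \sup_{t\in[0,T]}|B(t)|$ trivially. The nontrivial piece is the telescoped local error. Since $\tilde{y}_k$ is the exact flow of \eqref{eq:ODE-LampMain2} started from $\hat{Y}^{LS}_k$ and $\hat{y}_k$ is the deterministic integrator started from the same initial value, Assumption~\ref{ass:localInt} applied on each subinterval $[t_k,t_{k+1}]$ yields $|\hat{y}_k(t_{k+1})-\tilde{y}_k(t_{k+1})|\leq K\sqrt{1+|\hat{Y}^{LS}_k|^2}\Delta t^2\leq K(1+|\hat{Y}^{LS}_k|)\Delta t^2$, using $\sqrt{1+x^2}\leq 1+|x|$.

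Summing over $k$ and using $\sum_{k=0}^{m-1}K\Delta t^2\leq KM\Delta t^2=KT\Delta t$, I would arrive at
\begin{equation*}
|\hat{Y}^{LS}_m|\leq |Y(0)|+L_HT+\sup_{t\in[0,T]}|B(t)|+KT\Delta t+K\Delta t^2\sum_{k=0}^{m-1}|\hat{Y}^{LS}_k|.
\end{equation*}
Since the right-hand side is monotone in $m$, the same estimate holds with $|\hat{Y}^{LS}_m|$ replaced by $\sup_{j\leq m}|\hat{Y}^{LS}_j|$. The discrete Grönwall inequality then gives
\begin{equation*}
\sup_{j\leq m}|\hat{Y}^{LS}_j|\leq \bigl(|Y(0)|+L_HT+\sup_{t\in[0,T]}|B(t)|+KT\Delta t\bigr)\,(1+K\Delta t^2)^m,
\end{equation*}
and taking $m=M$ together with $(1+K\Delta t^2)^M\leq e^{KM\Delta t^2}=e^{KT\Delta t}$ yields the claimed bound.

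There is no real obstacle here; the one point requiring a little care is that Assumption~\ref{ass:localInt} produces a factor $\sqrt{1+|\hat{Y}^{LS}_k|^2}$ that makes the local error implicit in $\hat{Y}^{LS}$ itself, forcing the use of Grönwall rather than a one-shot triangle-inequality bound. This is also why the exponential factor $e^{KT\Delta t}$ (and not a pure constant) appears in the final estimate, and why the time-step restriction mentioned before Proposition~\ref{prop:LSbound} is consistent with—but not needed for—this almost-sure bound.
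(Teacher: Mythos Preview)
Your proof is correct and follows essentially the same approach as the paper's: both start from the representation formula~\eqref{eq:expresshatYLS}, bound the $H+\mu$ integral by $L_H T$ via Proposition~\ref{prop:H}, control the local ODE errors through Assumption~\ref{ass:localInt} and the elementary inequality $\sqrt{1+x^2}\leq 1+|x|$, and close with a discrete Gr\"onwall argument. The only cosmetic difference is that you spell out the Gr\"onwall step via $(1+K\Delta t^2)^M\leq e^{KT\Delta t}$, whereas the paper invokes the lemma directly in its exponential form.
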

We remark the time-step restriction $K T \Delta t \leq \gamma$, for some constant $\gamma>0$, in the convergence result in Theorem~\ref{theo:main_v3} below comes from Propostion~\ref{prop:LSbound}. Observe that Proposition~\ref{prop:LSbound} provides a bound on $\sup_{m=0,\ldots,M} |\hat{Y}^{LS}_{m}|$ that is independent of $\mu$.
\begin{proof}[Proof of Proposition~\ref{prop:LSbound}]
We first apply the triangle inequality to the representation formula for $\hat{Y}^{LS}$ in equation~\eqref{eq:expresshatYLS} to estimate
\begin{equation*}
\begin{split}
| \hat{Y}^{LS}_{m} | &\leq |Y(0)| + \sum_{k=0}^{m-1} |\hat{y}_{k}(t_{k+1}) - \tilde{y}_{k}(t_{k+1})| \\ &+ \sum_{k=0}^{m-1} \int_{t_{k}}^{t_{k+1}} | H(\tilde{y}_{k}(s))+\mu| \diff s + |B(t_{m})|.
\end{split}
\end{equation*}
Next, since we have the estimate
\begin{equation*}
|\hat{y}_{k}(t_{k+1}) - \tilde{y}_{k}(t_{k+1})| \leq K \sqrt{1 + |\hat{Y}^{LS}_{k}|^{2}} \Delta t^{2} \leq K \left( 1 + |\hat{Y}^{LS}_{k}| \right) \Delta t^{2}
\end{equation*}
by Assumption~\ref{ass:localInt} and since $|H(x)+\mu|$ is uniformly bounded by $L_{H}$ by Proposition~\ref{prop:H}, we can further estimate
\begin{equation*}
\begin{split}
| \hat{Y}^{LS}_{m} | &\leq |Y(0)| + K \Delta t^{2} \sum_{k=0}^{m-1} \left( 1 + |\hat{Y}^{LS}_{k}| \right) + L_{H} t_{m} + |B(t_{m})| \\ &\leq |Y(0)| + \left(K \Delta t + L_{H} \right) T + K \Delta t^{2} \sum_{k=0}^{m-1} |\hat{Y}^{LS}_{k}| + \sup_{t \in [0,T]} |B(t)|
\end{split}
\end{equation*}
where we used that $m \Delta t = t_{m} \leq T$. We then apply Grönwall's lemma to conclude that
\begin{equation}\label{eq:Ymest}
|\hat{Y}^{LS}_{m}| \leq \left( K T \Delta t + L_{H} T + |Y(0)| + \sup_{t \in [0,T]} |B(t)| \right) e^{K T \Delta t},
\end{equation}
almost surely. Applying $\sup_{m=0,\ldots,M}$ to equation~\eqref{eq:Ymest} gives the desired result.
\end{proof}
We now prove the main convergence theorem in this section. The proof is similar to the proof of Theorem~\ref{theo:main_v2}, but with the different representation formula in equation~\eqref{eq:expresshatYLS}. We provide both proofs for completeness.
\begin{theorem}\label{theo:main_v3}
Let $M\in\N$, $T > 0$, $\Delta t = T/M$ and let $x_{0} \in \mathring{D}$.
Suppose Assumptions~\ref{ass:f},~\ref{ass:g},~\ref{ass:fg} and ~\ref{ass:localInt} are satisfied and that there exists some constant $\gamma > 0$ such that $K T \Delta t \leq \gamma$. Let $\hat{X}^{LS} = \Phi^{-1}(Y^{LS})$, where $\hat{Y}^{LS}$ is defined by the splitting scheme in equation~\eqref{eq:LampSplitDisc2}, and let $X$ be the exact solution of the considered SDE in equation~\eqref{eq:SDEmain}. Then, for every $p \geq 1$, it holds
\begin{equation*}
\left( \E \left[ \sup_{m=0, \ldots, M} | \hat{X}^{LS}_{m} - X(t_{m}) |^{p} \right] \right)^{\frac{1}{p}} \leq C(p,\Phi^{-1},H,K,T,\mu,\gamma) \Delta t.
\end{equation*}
\end{theorem}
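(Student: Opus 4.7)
The plan is to mimic the proof of Theorem~\ref{theo:main_v2} as closely as possible, exploiting the fact that the representation formula~\eqref{eq:expresshatYLS} differs from~\eqref{eq:expressYLS} by only one extra sum capturing the deterministic integrator error. As in the previous proof, I first reduce to a bound on $|\hat{Y}^{LS}_m - Y(t_m)|$ via the global Lipschitz constant $L_{\Phi^{-1}}$ from Proposition~\ref{prop:phiinv}, so it suffices to handle the $Y$-level error.

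Subtracting the integral representation of $Y(t_m)$ from~\eqref{eq:expresshatYLS}, the Brownian terms cancel and I obtain
\begin{equation*}
\hat{Y}^{LS}_m - Y(t_m) = \underbrace{\sum_{k=0}^{m-1}\bigl(\hat{y}_k(t_{k+1}) - \tilde{y}_k(t_{k+1})\bigr)}_{\text{ODE integrator error}} + \sum_{k=0}^{m-1}\int_{t_k}^{t_{k+1}} \bigl(H(\tilde{y}_k(s)) - H(Y(s))\bigr)\diff s.
\end{equation*}
Since $\tilde{y}_k$ is the \emph{exact} ODE flow starting from $\hat{Y}^{LS}_k$, the second sum has the exact structure of~\eqref{eq:LSerr1} with $y_k$ replaced by $\tilde{y}_k$ and $Y^{LS}_k$ replaced by $\hat{Y}^{LS}_k$. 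I therefore recycle the three-term decomposition~\eqref{eq:err1} verbatim: one piece gives the Grönwall-ready term $L_H\Delta t |\hat{Y}^{LS}_k - Y(t_k)|$, another gives $L_H(L_H+|\mu|)\Delta t^2$ using uniform boundedness of $H+\mu$ from Proposition~\ref{prop:H}, and the third is handled by the same Itô-expansion trick leading to~\eqref{eq:Hito21}, producing two deterministic $O(T\Delta t)$ terms plus a stochastic iterated-integral term whose $L^p$ norm is $O(\Delta t)$ by stochastic Fubini and BDG.

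The new ingredient is the ODE integrator error term. By Assumption~\ref{ass:localInt},
\begin{equation*}
\sum_{k=0}^{m-1}|\hat{y}_k(t_{k+1}) - \tilde{y}_k(t_{k+1})| \leq K\Delta t^2 \sum_{k=0}^{m-1}\sqrt{1+|\hat{Y}^{LS}_k|^2} \leq K T \Delta t \Bigl(1 + \sup_{k=0,\ldots,M}|\hat{Y}^{LS}_k|\Bigr).
\end{equation*}
Here Proposition~\ref{prop:LSbound} is exactly what I need: it bounds $\sup_k|\hat{Y}^{LS}_k|$ by a random variable of the form $C(1+\sup_{t\in[0,T]}|B(t)|)e^{KT\Delta t}$, and invoking the time-step restriction $KT\Delta t\leq \gamma$ replaces the exponential factor by the constant $e^\gamma$. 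Since $\sup_{t\in[0,T]}|B(t)|$ has finite moments of all orders, this whole term is bounded by $\Delta t$ times a random variable in $\bigcap_{p\geq 1}L^p(\Omega)$.

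Collecting the estimates and applying a discrete Grönwall inequality yields
\begin{equation*}
\sup_{m=0,\ldots,M}|\hat{Y}^{LS}_m - Y(t_m)| \leq \Delta t\, \hat{\eta}_{\Delta t}\, e^{L_H T},
\end{equation*}
where $\hat{\eta}_{\Delta t}$ aggregates the deterministic constants, the rescaled stochastic iterated integral from~\eqref{eq:eta}, and the new term $KT(1+\sup_k|\hat{Y}^{LS}_k|)$. Each summand has finite $p$-th moments uniformly in $\Delta t$, so taking $L^p$-norms and multiplying by $L_{\Phi^{-1}}$ gives the claimed bound. The main obstacle is precisely the extra sum: without the time-step restriction $KT\Delta t\leq\gamma$, the $e^{KT\Delta t}$ factor from Proposition~\ref{prop:LSbound} would contaminate the constant with hidden $\Delta t$-dependence, so the role of $\gamma$ in the hypotheses is essential for pinning down the constant $C(p,\Phi^{-1},H,K,T,\mu,\gamma)$.
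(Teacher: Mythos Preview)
Your proposal is correct and follows essentially the same approach as the paper's own proof: reduce to the $Y$-level via $L_{\Phi^{-1}}$, subtract the representations to isolate the ODE-integrator error plus the $H(\tilde{y}_k)-H(Y)$ integrals, treat the latter by the same three-term splitting and It\^o expansion as in Theorem~\ref{theo:main_v2}, bound the former using Assumption~\ref{ass:localInt} together with Proposition~\ref{prop:LSbound} and the time-step restriction $KT\Delta t\le\gamma$, and conclude by discrete Gr\"onwall and the BDG-based moment bound. The only cosmetic difference is that the paper presents the ODE-integrator estimate before the three-term decomposition rather than after.
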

The dependence of the constant $C(p,\Phi^{-1},H,K,T,\mu,\gamma)$ on the parameters can be found in the proof of Theorem~\ref{theo:main_v3}.
\begin{proof}[Proof of Theorem~\ref{theo:main_v3}]
Since $\Phi^{-1}$ is globally Lipschitz continuous by Proposition~\ref{prop:phiinv}, the proof reduces to proving the corresponding $L^{p}(\Omega)$-estimate for $|\hat{Y}^{LS}_{m}-Y(t_{m})|$:
\begin{equation*}
| \hat{X}^{LS}_{m} - X(t_{m}) | = | \Phi^{-1}(\hat{Y}^{LS}_{m}) - \Phi^{-1}(Y(t_{m})) | \leq L_{\Phi^{-1}} | \hat{Y}^{LS}_{m} - Y(t_{m}) |.
\end{equation*}
The rest of the proof is based on comparing the following representation formulas for $\hat{Y}^{LS}_{m}$ and $Y(t_{m})$:
\begin{equation*}
\begin{split}
\hat{Y}^{LS}_{m} &= Y(0) + \sum_{k=0}^{m-1} \hat{y}_{k}(t_{k+1}) - \tilde{y}_{k}(t_{k+1}) \\ &+ \sum_{k=0}^{m-1} \int^{t_{k+1}}_{t_{k}} \left( H(\tilde{y}_{k}(s)) + \mu \right) \diff s + B(t_{m}),
\end{split}
\end{equation*}
introduced in equation~\eqref{eq:expresshatYLS}, and the integral representation of $Y$ that we re-write on similar form
\begin{equation*}
\begin{split}
Y(t_{m}) &= Y(0) + \int^{t_{m}}_{0} \left( H(Y(s)) + \mu) \right) \diff s + B(t_{m}) \\ &= Y(0) + \sum_{k=0}^{m-1} \int_{t_{k}}^{t_{k+1}} \left( H(Y(s)) + \mu \right) \diff s + B(t_{m})
\end{split}
\end{equation*}
at $t=t_{m}$. Therefore
\begin{equation}\label{eq:LSerr}
\begin{split}
|\hat{Y}^{LS}_{m}-Y(t_{m})| &\leq \sum_{k=0}^{m-1} |\hat{y}_{k}(t_{k+1})-\tilde{y}_{k}(t_{k+1})| \\ &+ \left|\sum_{k=0}^{m-1} \int_{t_{k}}^{t_{k+1}} H\left(\tilde{y}_{k}(s)\right) - H\left(Y(s)\right) \diff s \right|,
\end{split}
\end{equation}
We first estimate the first term in equation~\eqref{eq:LSerr}. By Assumption~\ref{ass:localInt}, we can bound
\begin{equation*}
\begin{split}
|\hat{y}_{k}(t_{k+1})-\tilde{y}_{k}(t_{k+1})| &\leq K \sqrt{1 + |\hat{Y}^{LS}_{k}|^{2}} \Delta t^{2} \\ &\leq K \left(1 + |\hat{Y}^{LS}_{k}| \right) \Delta t^{2} \\ &\leq K \left(1 + \sup_{k=0,\ldots,M} |\hat{Y}^{LS}_{k}| \right) \Delta t^{2},
\end{split}
\end{equation*}
almost surely. Recall that $\sup_{k=0,\ldots,M} |\hat{Y}^{LS}_{k}|$ is finite, almost surely, by Proposition~\ref{prop:LSbound} and that the bound can be made independent of $\Delta t$ because of the time-step restriction $K T \Delta t \leq \gamma$. Thus
\begin{equation*}
\sum_{k=0}^{m-1} |\hat{y}_{k}(t_{k+1})-\tilde{y}_{k}(t_{k+1})| \leq K t_{m} \left(1 + \sup_{k=0,\ldots,M} |\hat{Y}^{LS}_{k}| \right) \Delta t.
\end{equation*}
Let us now estimate the second term on the right hand side of equation~\eqref{eq:LSerr}. We first split the integral as follows
\begin{equation}\label{eq:err}
\begin{split}
\int_{t_{k}}^{t_{k+1}} H \left( \tilde{y}_{k}(s) \right) - H \left( Y(s) \right) \diff s &= \int_{t_{k}}^{t_{k+1}} H \left( \tilde{y}_{k}(s) \right) - H \left( \hat{Y}^{LS}_{k} \right) \diff s \\ &+ \int_{t_{k}}^{t_{k+1}} H \left( \hat{Y}^{LS}_{k} \right) - H \left( Y(t_{k}) \right) \diff s \\ &+ \int_{t_{k}}^{t_{k+1}} H \left( Y(t_{k}) \right) - H \left( Y(s) \right) \diff s
\end{split}
\end{equation}
and consider each term separately. Observe that the second term on the right hand side of equation~\eqref{eq:err} is the term that we need for Grönwall's lemma after the simplification
\begin{equation*}
\int_{t_{k}}^{t_{k+1}} H \left( \hat{Y}^{LS}_{k} \right) - H \left( Y(t_{k}) \right) \diff s = \Delta t \left( H \left( \hat{Y}^{LS}_{k} \right) - H \left( Y(t_{k}) \right) \right) 
\end{equation*}
and after estimating it
\begin{equation*}
\left| \int_{t_{k}}^{t_{k+1}} H \left( \hat{Y}^{LS}_{k} \right) - H \left( Y(t_{k}) \right) \diff s \right| \leq L_{H} \Delta t \left| \hat{Y}^{LS}_{k} - Y(t_{k}) \right|
\end{equation*}
using that $H$ is globally Lipschitz continuous by Proposition~\ref{prop:H}. Recall now that $\tilde{y}_{k}(s) = \hat{Y}^{LS}_{k} + \int_{t_{k}}^{s} H(\tilde{y}_{k}(r)) \diff r$, for $s \in [t_{k},t_{k+1}]$, is the exact solution to the ODE in equation~\eqref{eq:ODE-LampMain2} starting at $t_{k}$ with initial value $\hat{Y}^{LS}_{k}$. Therefore, the first term on the right hand side in equation~\eqref{eq:err} can be estimated by
\begin{equation*}
\begin{split}
\left| \int_{t_{k}}^{t_{k+1}} H \left(\tilde{y}_{k}(s) \right) - H \left( \hat{Y}^{LS}_{k} \right) \diff s \right| &\leq L_{H} \int_{t_{k}}^{t_{k+1}} |\tilde{y}_{k}(s) - \hat{Y}^{LS}_{k} | \diff s \\ &\leq L_{H} \int_{t_{k}}^{t_{k+1}} \int_{t_{k}}^{s} |H(\tilde{y}_{k}(r))| \diff r \diff s
\end{split}
\end{equation*}
since $H$ is Lipschitz continuous and by using that $H$ is uniformly bounded by $L_{H} + |\mu|$ by Proposition~\ref{prop:H} we can further estimate
\begin{equation*}
L_{H} \int_{t_{k}}^{t_{k+1}} \int_{t_{k}}^{s} |H(\tilde{y}_{k}(r))| \diff r \diff s \leq L_{H} \left( L_{H} + |\mu| \right) \Delta t^{2}. 
\end{equation*}
It remains to consider the third term in equation~\eqref{eq:err}
\begin{equation*}
\int_{t_{k}}^{t_{k+1}} H \left( Y(t_{k}) \right) - H \left( Y(s) \right) \diff s.
\end{equation*}
Recall now that $H \in C^{2} \left( \mathbb{R} \right)$ by Proposition~\ref{prop:H}. Thus we can use Itô's formula as such:
\begin{equation*}
\begin{split}
H(Y(s)) &= H(Y(t_{k})) + \int_{t_{k}}^{s} H(Y(r)) H'(Y(r)) \diff r \\ &+ \frac{1}{2} \int_{t_{k}}^{s} H''(Y(r))  \diff r + \int_{t_{k}}^{s} H'(Y(r)) \diff B(r)
\end{split}
\end{equation*}
to see that
\begin{equation}\label{eq:Hito}
\begin{split}
\int_{t_{k}}^{t_{k+1}} H \left( Y(t_{k}) \right) - H \left( Y(s) \right) \diff s  &= \int_{t_{k}}^{t_{k+1}} \int_{t_{k}}^{s} H(Y(r)) H'(Y(r)) \diff r \diff s \\ &+ \frac{1}{2} \int_{t_{k+1}}^{t_{k+1}} \int_{t_{k}}^{s} H''(Y(r)) \diff r \diff s  \\ &+ \int_{t_{k}}^{t_{k+1}} \int_{t_{k}}^{s} H'(Y(r)) \diff B(r) \diff s.
\end{split}
\end{equation}
To obtain convergence order $1$, we have to sum over $k=0,\ldots,m-1$ before estimating the separate terms in equation~\eqref{eq:Hito}
\begin{equation}\label{eq:Hito2}
\begin{split}
\sum_{k=0}^{m-1} \int_{t_{k}}^{t_{k+1}} H \left( Y(t_{k}) \right) - H \left( Y(s) \right) \diff s  &= \int_{0}^{t_{m}} \int_{\ell(s)}^{s} H(Y(r)) H'(Y(r)) \diff r \diff s \\ &+ \frac{1}{2} \int_{0}^{t_{m}} \int_{\ell(s)}^{s} H''(Y(r)) \diff r \diff s \\ &+ \int_{0}^{t_{m}} \int_{\ell(s)}^{s} H'(Y(r)) \diff B(r) \diff s,
\end{split}
\end{equation}
where we recall that $\ell(s) = t_{k}$ whenever $s \in [t_{k},t_{k+1})$. The first two terms on the right hand side of equation~\eqref{eq:Hito2} can estimated by
\begin{equation*}
\begin{split}
\left| \int_{0}^{t_{m}} \int_{\ell(s)}^{s} H(Y(r)) H'(Y(r)) \diff r \diff s \right| &\leq \int_{0}^{t_{m}} \int_{\ell(s)}^{s} | H(Y(r)) H'(Y(r))| \diff r \diff s \\ &\leq L_{H} \left( L_{H} + |\mu| \right) t_{m} \Delta t
\end{split}
\end{equation*}
and
\begin{equation*}
\begin{split}
\left| \frac{1}{2} \int_{0}^{t_{m}} \int_{\ell(s)}^{s} H''(Y(r)) \diff r \diff s \right| &\leq \frac{1}{2} \int_{0}^{t_{m}} \int_{\ell(s)}^{s} |H''(Y(r)) | \diff r \diff s \\ &\leq \frac{1}{2} L_{H} t_{m} \Delta t,
\end{split}
\end{equation*}
where we used that $H$ and $H',H''$ are uniformly bounded by $L_{H} + |\mu|$ and $L_{H}$, respectively, by Proposition~\ref{prop:H}. The third term on the right hand side of equation~\eqref{eq:Hito2} is a random variable with finite $p$th moments for every $p \geq 1$, see Appendix~\ref{sec:appA} for a proof. Inserting everything back into equation~\eqref{eq:LSerr} and estimating $m \Delta t = t_{m} \leq T$ gives us
\begin{equation*}
\begin{split}
|\hat{Y}^{LS}_{m}-Y(t_{m})| &\leq K T \left(1 + \sup_{k=0,\ldots,M} |\hat{Y}^{LS}_{k}| \right) \Delta t + 2 L_{H} \left( L_{H} + |\mu| \right) T \Delta t \\ &+ \frac{1}{2} L_{H} T \Delta t + \left| \int_{0}^{T} \int_{\ell(s)}^{s} H'(Y(r)) \diff B(r) \diff s \right| \\ &+ L_{H} \Delta t \sum_{k=0}^{m-1} |\hat{Y}^{LS}_{k} - Y(t_{k})|,
\end{split}
\end{equation*}
where we have tried to group the terms to make it clear where the terms come from. An application of a discrete Grönwall's lemma followed by applying $\sup_{m=0,\ldots,M}$ now yields
\begin{equation}\label{eq:asconv}
\sup_{m=0,\ldots,M}|\hat{Y}^{LS}_{m}-Y(t_{m})| \leq \Delta t \eta_{\Delta t} e^{L_{H} T}
\end{equation}
where $\eta_{\Delta t}$ is the random variable
\begin{equation*}
\begin{split}
\eta_{\Delta t} &= K T \left(1 + \sup_{k=0,\ldots,M} |\hat{Y}^{LS}_{k}| \right) + 2 L_{H} \left( L_{H} + |\mu| \right) T + \frac{1}{2} L_{H} T \\ &+ \left| \frac{1}{\Delta t} \int_{0}^{T} \int_{\ell(s)}^{s} H'(Y(r)) \diff B(r) \diff s \right|.
\end{split}
\end{equation*}
Note that the random variable $\eta_{\Delta t}$ is not finite almost sure uniformly in $\Delta t$, but $\Delta t \eta_{\Delta t}$ is finite almost surely uniformly in $\Delta t$. To finish the proof we need to make sure that $\eta_{\Delta t}$ has finite $p$th moments for every $p \geq 1$, since then
\begin{equation*}
\begin{split}
\left( \E \left[ \sup_{m=0,\ldots,M}|\hat{Y}^{LS}_{m}-Y(t_{m})|^{p} \right] \right)^{\frac{1}{p}} &\leq \Delta t \left( \E \left[ \eta_{\Delta t}^{p} \right] \right)^{\frac{1}{p}} e^{L_{H} T} \\ &= C(p,H,K,T,\mu,\gamma) \Delta t.
\end{split}
\end{equation*}
By Proposition~\ref{prop:LSbound}, the first three terms constituting $\eta_{\Delta t}$ are almost surely bounded and thus in particular have finite $p$th moments for every $p \geq 1$. An application of the stochastic Fubini's theorem followed by the Burkholder-Davis-Gundy (BDG) inequality gives us that the remaining term has finite $p$th moments
\begin{equation*}
\E \left[ \left| \frac{1}{\Delta t} \int_{0}^{T} \int_{\ell(s)}^{s} H'(Y(r)) \diff B(r) \diff s \right|^{p} \right] \leq C(p) L_{H} T^{p/2},
\end{equation*}
where the constant $C(p)$ is the constant in the BDG inequality. We provide the proof of the above $p$th moment estimate in Appendix~\ref{sec:appA}. This concludes the proof.
\end{proof}

In the same manner as almost sure pathwise convergence of $X^{LS}$ towards $X$ given in Corollary~\ref{cor:as_conv}, we obtain, as a corollary to Theorem~\ref{theo:main_v3}, almost sure pathwise convergence of $\hat{X}^{LS}$ towards $X$ with rate $1-\epsilon$ for every $\epsilon>0$.
\begin{corollary}\label{cor:as_conv2}
Under the same assumptions and notation as in Theorem~\ref{theo:main_v3}, there exists for every $\epsilon>0$ a random variable $\zeta_{\epsilon}$, with $\E \left[ |\zeta_{\epsilon}|^{p} \right] < \infty$ for every $p \geq 1$, such that
\begin{equation*}
\sup_{m=0, \ldots, M} |\hat{X}^{LS}_{m}-X(t_{m})| \leq \zeta_{\epsilon} \Delta t^{1 - \epsilon}
\end{equation*}
almost surely.
\end{corollary}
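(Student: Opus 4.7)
The plan is to deduce this corollary from Theorem~\ref{theo:main_v3} by a direct application of Lemma~2.1 in~\cite{MR2320830}, exactly as was done for Corollary~\ref{cor:as_conv} via Theorem~\ref{theo:main_v2}. The Kloeden--Neuenkirch-type lemma provides the standard mechanism for upgrading an $L^p(\Omega)$ convergence rate that is uniform in $p$ to almost sure pathwise convergence at the cost of an arbitrarily small $\epsilon > 0$ in the exponent.

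First, I would set
\begin{equation*}
Z_M = \sup_{m=0,\ldots,M} |\hat{X}^{LS}_{m} - X(t_{m})|,
\end{equation*}
viewed as a sequence of nonnegative random variables indexed by $M \in \mathbb{N}$ with $\Delta t = T/M$. Theorem~\ref{theo:main_v3} provides, for every $p \geq 1$, a constant $C_{p} = C(p,\Phi^{-1},H,K,T,\mu,\gamma)$ such that
\begin{equation*}
\bigl(\mathbb{E}\bigl[Z_M^{p}\bigr]\bigr)^{1/p} \leq C_{p}\, \Delta t,
\end{equation*}
provided the time-step restriction $KT\Delta t \leq \gamma$ is satisfied. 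This is precisely the hypothesis of Lemma~2.1 in~\cite{MR2320830} with convergence rate $\alpha = 1$.

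Applying that lemma then yields: for every $\epsilon > 0$, there exists a random variable $\zeta_{\epsilon}$ with $\mathbb{E}[|\zeta_{\epsilon}|^{p}] < \infty$ for every $p \geq 1$, and such that
\begin{equation*}
Z_M \leq \zeta_{\epsilon}\, \Delta t^{1-\epsilon}
\end{equation*}
almost surely, which is the claim. The only substantive verification is that the hypothesis of the lemma is met for \emph{all} $p \geq 1$ (not merely a single large one), but this is precisely what Theorem~\ref{theo:main_v3} provides. There is no real obstacle here, since the hard work — obtaining $p$th moment estimates uniform in $\Delta t$ for the stochastic remainder term and controlling $\sup_{k} |\hat{Y}^{LS}_k|$ via Proposition~\ref{prop:LSbound} — was already carried out in the proof of Theorem~\ref{theo:main_v3}. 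Consequently, the corollary is essentially a one-line deduction once Theorem~\ref{theo:main_v3} is in place, in complete parallel to the derivation of Corollary~\ref{cor:as_conv} from Theorem~\ref{theo:main_v2}.
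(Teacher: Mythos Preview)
Your proposal is correct and matches the paper's approach exactly: the paper states that Corollary~\ref{cor:as_conv2} follows from Theorem~\ref{theo:main_v3} in the same manner as Corollary~\ref{cor:as_conv} follows from Theorem~\ref{theo:main_v2}, namely by a direct application of Lemma~2.1 in~\cite{MR2320830}.
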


\section{Numerical experiments}\label{sec:num}
In this section we provide numerical experiments to support and verify the theoretical results in Section~\ref{sec:scheme} and Section~\ref{sec:scheme_v2}. We introduce a noise scale parameter $\lambda > 0$ in the considered SDE in equation~\eqref{eq:SDEmain} to avoid the need to run the numerical experiments for large time horizon $T>0$. Hence, we consider the SDE
\begin{equation}\label{eq:SDEnum}
\left\lbrace
\begin{aligned}
& \diff X(t) = f(X(t)) \diff t + \lambda g(X(t)) \diff B(t),\ t \in (0,T],\\ 
& X(0) = x_{0} \in \mathring{D}.
\end{aligned}
\right.
\end{equation}
For each numerical experiment, we use $T=0.4$ or $T=1$ and we use the initial value $x_{0}=0.9$ or we let $x_{0}$ be uniformly distributed on $\mathring{D}$.

We provide numerical results for three choices of the drift and diffusion coefficients $f$ and $g$:
\begin{itemize}
\item the Susceptible–Infected–Susceptible (SIS) SDE with $f(x) = x-x^2$ and $g(x) = x-x^2$,
\item the Nagumo SDE with $f(x) = -x (1-x) (1-x)$ and $g(x) = - x + x^2$,
\item an Allen--Cahn type SDE with $f(x) = x - x^3$ and $g(x) = 1 - x^2$.
\end{itemize}
The first two examples above satisfy the assumptions of Section~\ref{sec:scheme_v2} and the Allen-Cahn typ SDE satisfies the assumptions in Section~\ref{sec:scheme}, see Appendix~\ref{sec:app} for details. The SIS SDE \cite{MR4220738, MR3015083, GraySIS, MR4274899, Yang2021FirstOS, MR4268206} is a model for the spread of epidemics and is also used in gene frequency modelling (for example Wright--Fisher diffusion). We refer the interested reader to, for example, \cite{MR3015083, GraySIS} for detailed descriptions of such models. The Nagumo SDE \cite{MR3308418, MCKEAN1970209} and the Allen--Cahn type SDE \cite{ALLEN19791085, MR3986273, Funaki1995TheSL, MR3308418} are motivated by a finite difference space discretisation of the corresponding stochastic partial differential equations (SPDEs). The stochastic Nagumo equation is a stochastic model for the voltage in the axon of a neuron. The stochastic Allen--Cahn equation is a stochastic model for the time evolution of the interface between two phases of a material. We refer the interested reader to \cite{MR3308418} for details on these SPDEs. 

Recall that $M \in \mathbb{N}$ and $\Delta t = T/M$ denote the number of subintervals used to partition $[0,T]$ and the time-step, respectively, of the numerical schemes. We denote by $\Delta B_{m} = B(t_{m+1}) - B(t_{m})$ the increment of the Brownian motion over the interval $[t_{m},t_{m+1}] = [ m \Delta t, (m+1) \Delta t]$. We compare boundary-preservation of the proposed Lamperti-splitting schemes, denoted LS  and $X^{LS}_{m}$ below, as defined by $\Phi^{-1}(Y^{LS}_{m+1})$ and equation~\eqref{eq:LampSplitDisc} or by $\Phi^{-1}(\hat{Y}^{LS}_{m+1})$ and equation~\eqref{eq:LampSplitDisc2} to boundary-preservation of the following integrators for SDEs:
\begin{itemize}
\item the Euler--Maruyama scheme (denoted EM below), see for instance \cite{MR1214374}
$$
X^{\rm EM}_{m+1}=X^{\rm EM}_m+ f(X^{\rm EM}_{m})\Delta t +\lambda g(X^{\rm EM}_m) \Delta B_{m},
$$
\item the semi-implicit Euler--Maruyama scheme (denoted SEM below), see for instance \cite{MR1214374}
$$
X^{\rm SEM}_{m+1}=X^{\rm SEM}_m+ f(X^{\rm SEM}_{m+1})\Delta t + \lambda g(X^{\rm SEM}_m) \Delta B_{m},
$$
\item the tamed Euler scheme (denoted TE below), see for instance \cite{MR2985171, MR3543890}
$$
X^{\rm TE}_{m+1}=X^{\rm TE}_m+ f_{M}(X^{\rm TE}_{m})\Delta t + \lambda g_{M}(X^{\rm TE}_m) \Delta B_{m},
$$
where
$$
f_{M}(x) = \frac{f(x)}{1 + M^{-1/2} |f(x)| + M^{-1/2} |\lambda g(x)|^{2}}
$$
$$
g_{M}(x) = \frac{g(x)}{1 + M^{-1/2} |f(x)| + M^{-1/2} |\lambda g(x)|^{2}}.
$$
\end{itemize}

We consider values of $\lambda>0$ which illustrate that EM, SEM, and TE are not boundary-preserving. Note that $g$ has to be a map $\mathbb{R} \to \mathbb{R}$ (see Assumption~\ref{ass:g}) for the EM, SEM and TE schemes to be well-defined, but this is not a problem as $g$ is a polynomial defined on $\mathbb{R}$ in the examples we provide.

For the above three SDEs, we provide numerical experiments illustrating the boundary-preservation as well as the $L^{2}(\Omega)$-convergence of order $1$ of the LS scheme as derived in Sections~\ref{sec:scheme} and~\ref{sec:scheme_v2}. We present boundary-preservation in tables displaying the proportion out of $100$ simulated sample paths that contain only values in the domain $\mathring{D}$ and we present, in loglog plots, the $L^{2}(\Omega)$-errors
\begin{equation*}
\left( \E \left[ \sup_{m=1,\ldots,M} |X^{LS}_{m} - X^{ref}_{m}|^{2} \right] \right)^{1/2}
\end{equation*}
over the time grid points $\{ t_{m}:\ m=1,\ldots, M \}$. Recall that we let $X^{LS}$ denote either the semi-analytic LS scheme in Section~\ref{sec:scheme} or the LS scheme in Section~\ref{sec:scheme_v2}. The reference solution $X^{ref}$ is computed using the LS scheme with time-step $\Delta t^{ref} = 10^{-7}$. We have also computed the $L^{2}(\Omega)$-errors for the LS scheme with the reference solution computed using the Lamperti EM scheme (see, e.g., \cite{MR4274899, MR4268206}) and the Lamperti SEM scheme (see, e.g., \cite{MR3006996, MR3248050}), respectively, and obtained similar results. For approximation of the expectations for the $L^{2}(\Omega)$-convergence, we use $300$ simulated samples. We have checked that $300$ simulated samples is sufficient for the Monte Carlo error to be negligible.

For ease of presentation, lengthy and complicated formulas are collected in Appendix~\ref{sec:app}.

\begin{example}[SIS SDE]
Consider the SIS epidemic model given by
\begin{equation*}
\diff X(t) = X(t) \left(1- X(t) \right) \diff t + \lambda X(t)(1-X(t)) \diff B(t)
\end{equation*}
with initial value $X(0)=x_{0} \in (0,1)$;
that is, $f(x) = x(1-x)$ and $g(x) = \lambda x(1-x)$ in the considered SDE in equation~\eqref{eq:SDEmain} are both quadratic. In this example we let $H(x) = \lambda^2 \Phi^{-1}(x)$ and $\mu = 1 - \lambda^2/2$, see Section~\ref{sec:app-SIS} for more details about the explicit formulas used for the implementation of the LS scheme for the SIS SDE. The exact solution $X$ takes values in $(0,1)$, since the inverse Lamperti transform
\begin{equation*}
\Phi^{-1}(x) = \frac{w_{0} e^{x}}{w_{0} e^{x} + (1-w_{0})}
\end{equation*}
takes values in $(0,1)$, for any $w_{0} \in (0,1)$. See Section~\ref{sec:BB} and Section~\ref{sec:app-SIS} for more details. We first provide, in Figure~\ref{num:SIS_path_comp} below, a plot showing sample paths where the comparison schemes EM, SEM and TE all leave the domain $(0,1)$.

\begin{figure}[htp]
\begin{center}
  \includegraphics[width=0.8\textwidth]{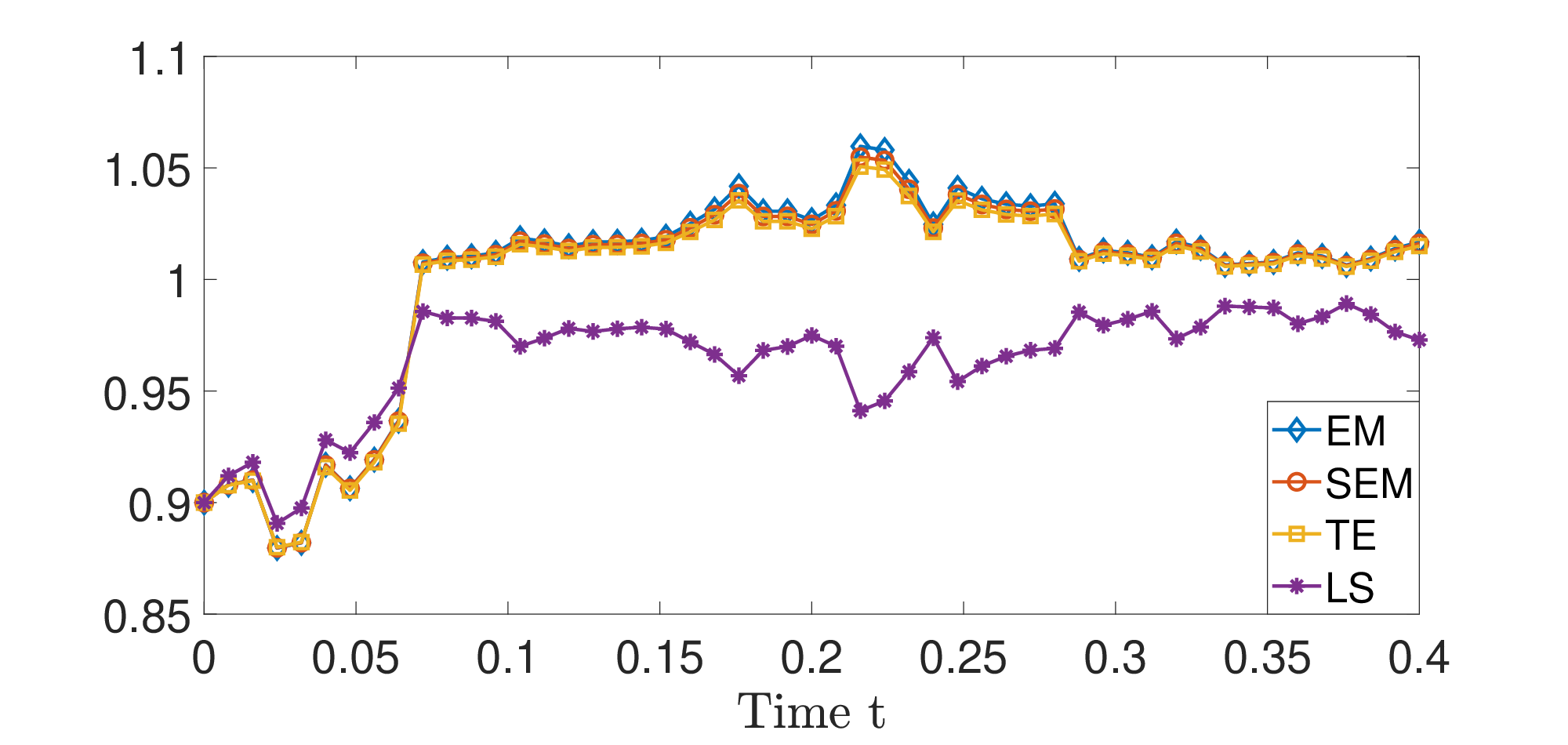}
  \caption{Path comparison of the EM, SEM, TE and LS schemes applied to the SIS SDE with parameters $\lambda = 4$, $x_{0} = 0.9$, $T = 0.4$ and $M=50$.}\label{num:SIS_path_comp}
  \end{center}
\end{figure}

In Table~\ref{tb:SIS}, we observe that the LS scheme preserves the domain $(0,1)$ of the SIS SDE while the integrators EM, SEM, and TE do not. As expected, the number of samples that preserve the domain $(0,1)$ for EM, SEM, and TE, respectively, decreases as $\lambda>0$ increases. In Table~\ref{tb:SIS}, we used $\Delta t=10^{-3}$, $T=1$, $N=100$ number of samples and $x_{0}$ uniformly distributed on $(0,1)$ for each sample.

\begin{table}[!htbp]
\begin{center}
\begin{tabular}{||c c c c c||} 
 \hline
 $\lambda$ & LS & EM & SEM & TE\\ [0.5ex] 
 \hline\hline
 $6$ & $100/100$ & $100/100$ & $100/100$ & $100/100$ \\ 
 \hline
 $7$ & $100/100$ & $94/100$ & $89/100$ & $92/100$ \\ 
 \hline
 $8$ & $100/100$ & $71/100$ & $63/100$ & $70/100$ \\ [1ex]
 \hline
\end{tabular}
\caption{Proportion of samples containing only values in $(0,1)$ out of $100$ simulated sample paths for the Lamperti-splitting scheme (LS), the Euler--Maruyama scheme (EM), the semi-implicit Euler--Maruyama scheme (SEM), and the tamed Euler scheme (TE) for the SIS SDE for different choices of $\lambda>0$. The parameters used are: $T=1$, $\Delta t = 10^{-3}$ and with $x_{0}$ uniformly distributed on $(0,1)$ for each sample. \label{tb:SIS}}
\end{center}
\end{table}
In Figure~\ref{num:SIS_conv} we present the $L^{2}(\Omega)$-errors for the same values of $\lambda$ as used in Table~\ref{tb:SIS}. The $L^{2}(\Omega)$-error rates in Figure~\ref{num:SIS_conv} agree with the rates obtained in Theorem~\ref{theo:main_v3}. We use $T=1$, $N=300$ number of samples to approximate the expected value and $x_{0}$ is uniformly distributed on $(0,1)$ for each sample in Figure~\ref{num:SIS_conv}.

\begin{figure}[htp]
\begin{center}
  \includegraphics[width=0.8\textwidth]{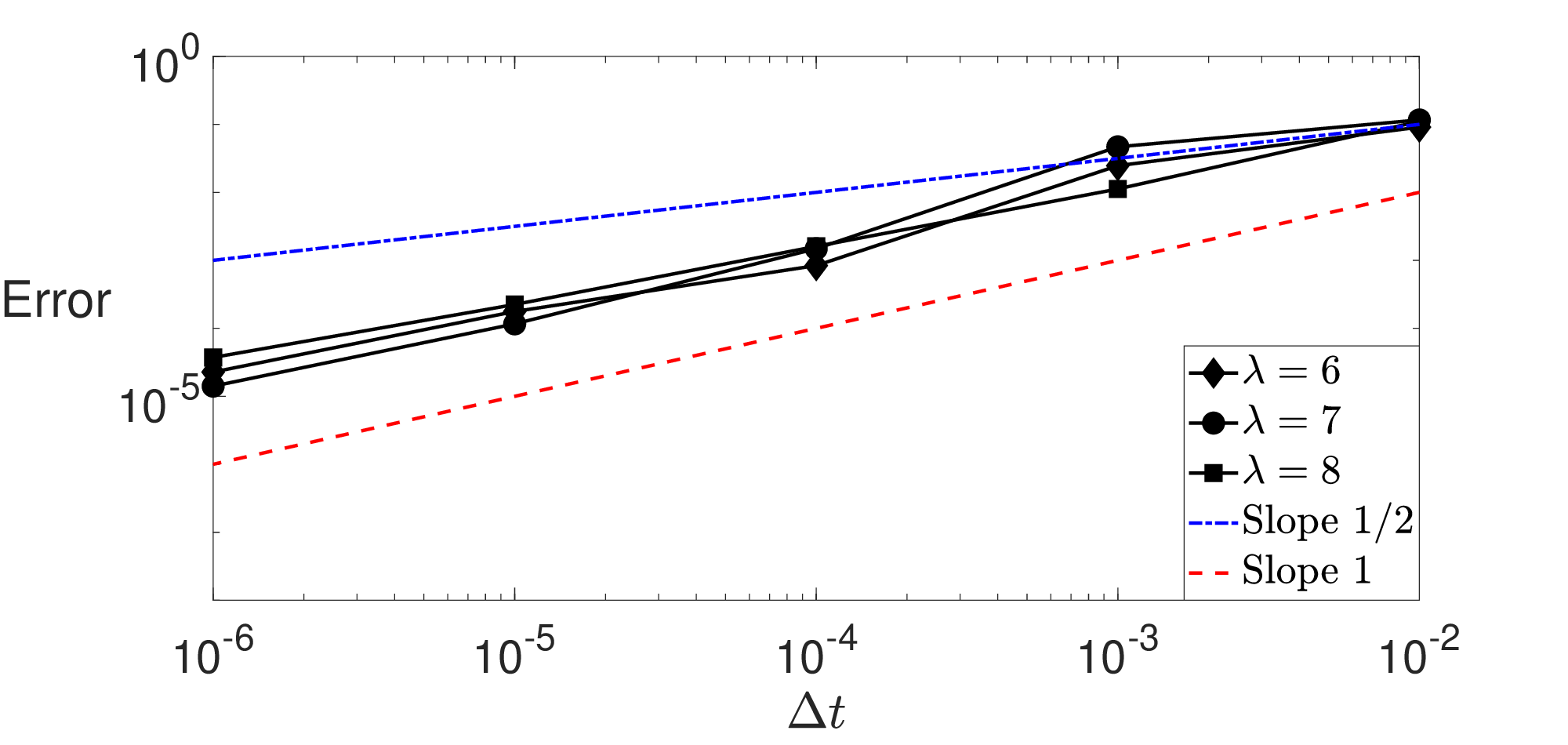}
  \caption{$L^{2}(\Omega)$-errors on the interval $[0,1]$ of the Lamperti-splitting scheme (LS) for the SIS SDE for different choices of $\lambda>0$ and reference lines with slopes $1/2$ and $1$. Averaged over $300$ samples.}\label{num:SIS_conv}
  \end{center}
\end{figure}

\end{example}

\begin{example}[Nagumo SDE]
Consider the Nagumo SDE given by
\begin{equation*}
\diff X(t) = - X(t)(1-X(t))(1-X(t)) \diff t - \lambda X(t) (1-X(t)) \diff B(t)
\end{equation*}
with initial value $X(0) = x_{0} \in (0,1)$; that is, $f(x) = - x (1-x)(1-x)$ is cubic and $g(x) = - \lambda x(1-x)$ is quadratic in the considered SDE in equation~\eqref{eq:SDEmain}. Here we let $H(x) = -(1 + \lambda^2) \Phi^{-1}(x)$ and $\mu = (1 + \frac{\lambda^2}{2})$, see Section~\ref{sec:app-NAG} for more details about explicit formulas used in the implementation of the LS scheme for the Nagumo SDE. As is derived in Section~\ref{sec:app-NAG}, the inverse Lamperti transform is given by
\begin{equation*}
\Phi^{-1}(x) = \frac{w_{0}}{(1-w_{0})e^{ x} + w_{0}}
\end{equation*}
and takes values in $(0,1)$, for any $w_{0} \in (0,1)$. Hence, by Section~\ref{sec:BB}, the exact solution $X$ takes values in $(0,1)$. As in the previous example, we first provide sample paths of the comparison schemes EM, SEM and TE that are all leaving the domain $(0,1)$. See Figure~\ref{num:Nagumo_path_comp}.

\begin{figure}[htp]
\begin{center}
  \includegraphics[width=0.8\textwidth]{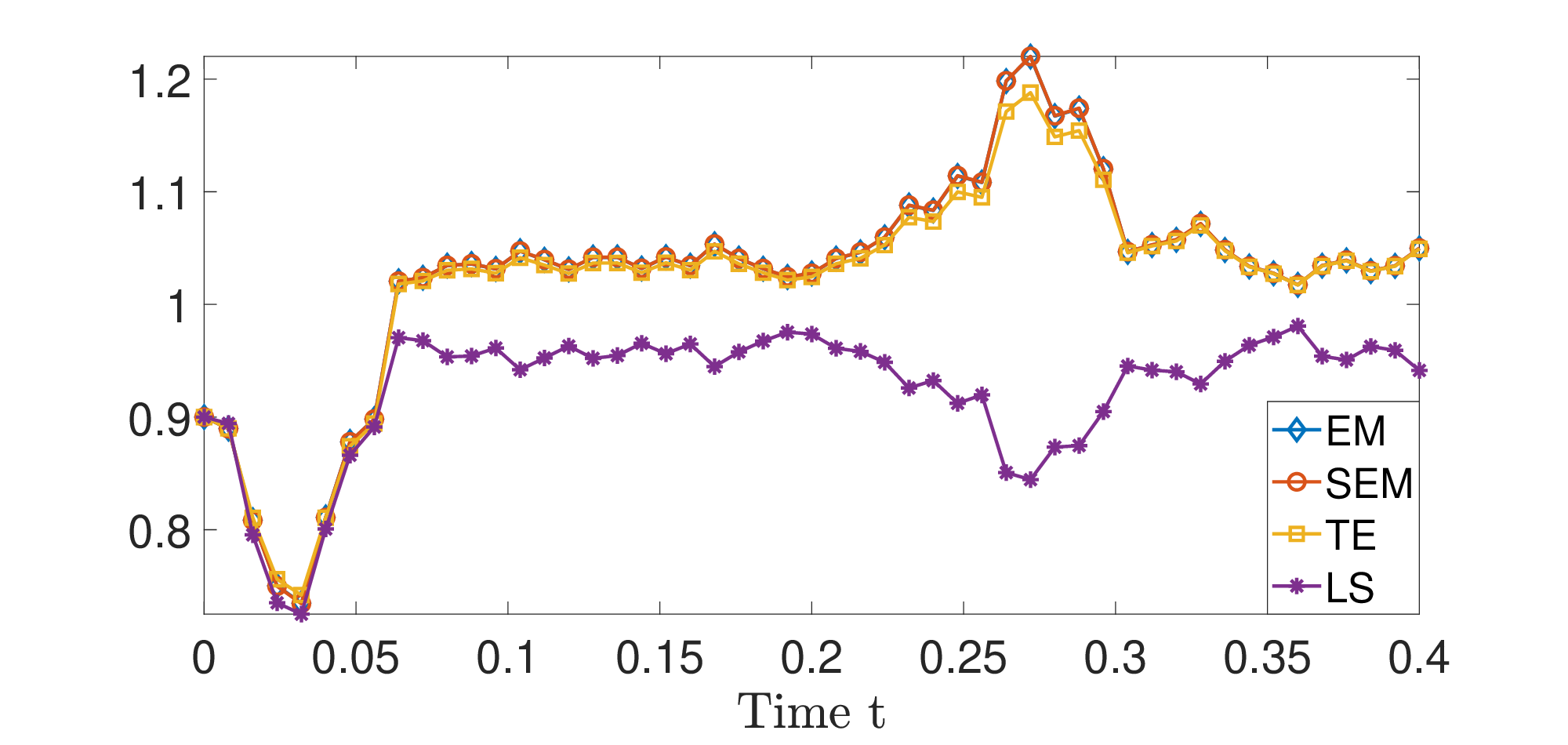}
  \caption{Path comparison of the EM, SEM, TEM and LS schemes applied to the Nagumo SDE with parameters $\lambda = 4$, $x_{0}=0.9$, $T = 0.4$ and $M=50$.}\label{num:Nagumo_path_comp}
  \end{center}
\end{figure}

Similarly to the SIS SDE case, Table~\ref{tb:Nagumo} shows that the integrators EM, SEM, and TE do not preserve the domain $(0,1)$ of the Nagumo SDE and the number of samples that preserve the domain $(0,1)$ decreases as $\lambda>0$ increases. Moreover, Table~\ref{tb:Nagumo} also confirms that the LS scheme preserves the domain $(0,1)$ of the Nagumo SDE. In Table~\ref{tb:Nagumo}, we used $\Delta t=10^{-3}$, $T=1$, $N=100$ number of samples and $x_{0}$ is uniformly distributed on $(0,1)$ for each sample.

\begin{table}[!htbp]
\begin{center}
\begin{tabular}{||c c c c c||} 
 \hline
 $\lambda$ & LS & EM & SEM & TE \\ [0.5ex] 
 \hline\hline
 $6$ & $100/100$ & $100/100$ & $100/100$ & $100/100$\\ 
 \hline
 $7$ & $100/100$ & $95/100$ & $97/100$ & $95/100$ \\ 
 \hline
 $8$ & $100/100$ & $75/100$  & $77/100$& $73/100$ \\ [1ex]
 \hline
\end{tabular}
\caption{Proportion of samples containing only values in $(0,1)$ out of $100$ simulated sample paths for the Lamperti-splitting scheme (LS), the Euler--Maruyama scheme (EM), the semi-implicit Euler--Maruyama scheme (SEM), and the tamed Euler scheme (TE) for the Nagumo SDE for different choices of $\lambda>0$. The parameters used are: $T=1$, $\Delta t = 10^{-3}$ and with $x_{0}$ uniformly distributed on $(0,1)$ for each sample. \label{tb:Nagumo}}
\end{center}
\end{table}
In Figure~\ref{num:Nagumo_conv} we present the $L^{2}(\Omega)$-errors for the same values of $\lambda$ as used in Table~\ref{tb:Nagumo}. The $L^{p}(\Omega)$-error rates in Figure~\ref{num:Nagumo_conv} agree with the rates obtained in Theorem~\ref{theo:main_v3}. We use $T=1$, $N=300$ number of samples to estimate the expected value and $x_{0}$ uniformly distributed on $(0,1)$ for each sample in Figure~\ref{num:Nagumo_conv}.

\begin{figure}[htp]
\begin{center}
  \includegraphics[width=0.8\textwidth]{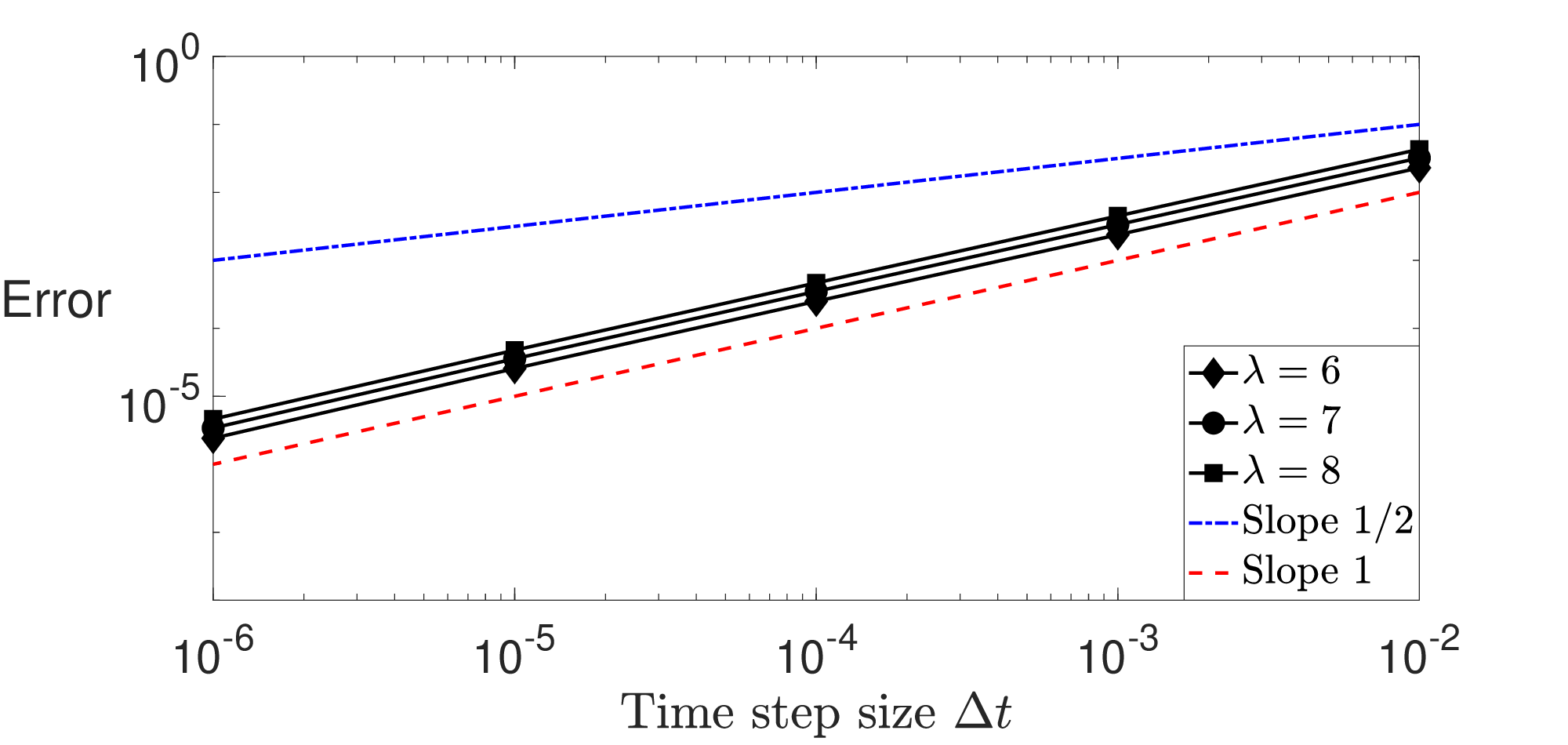}
  \caption{$L^{2}(\Omega)$-errors on the interval $[0,1]$ of the Lamperti-splitting scheme (LS) for the Nagumo SDE for different choices of $\lambda>0$ and reference lines with slopes $1/2$ and $1$. Averaged over $300$ samples.}\label{num:Nagumo_conv}
  \end{center}
\end{figure}

\end{example}
\begin{example}[Allen--Cahn SDE]
Consider the Allen--Cahn type SDE given by
\begin{equation*}
\diff X(t) = \left( X(t) - X(t)^{3} \right) \diff t + \lambda (1-X(t)^2) \diff B(t)
\end{equation*}
with initial value $X(0)=x_{0} \in (-1,1)$;
that is, $f(x) = x - x^{3}$ is cubic and $g(x) = \lambda(1 - x^{2})$ is quadratic in the considered SDE in equation~\eqref{eq:SDEmain}. For the Allen-Cahn type SDE we let $H(x) = (1 + \lambda^2) \Phi^{-1}(x)$ and $\mu = 0$, see Section~\ref{sec:app-AC} for more details about explicit formulas used in the implementation of the LS scheme for the Allen--Cahn SDE. Since the inverse Lamperti transform is given by
\begin{equation*}
\Phi^{-1}(x) = \frac{e^{2 x} - 1}{e^{2 x} + 1},
\end{equation*}
for the case $w_{0} = 0$, see Section~\ref{sec:app-AC} for details, Section~\ref{sec:BB} implies that the exact solution $X$ takes values in $(-1,1)$. We start with providing sample paths where the comparison schemes EM, SEM and TE all leave the domain $(-1,1)$. See Figure~\ref{num:AC_path_comp} below.

\begin{figure}[htp]
\begin{center}
  \includegraphics[width=0.8\textwidth]{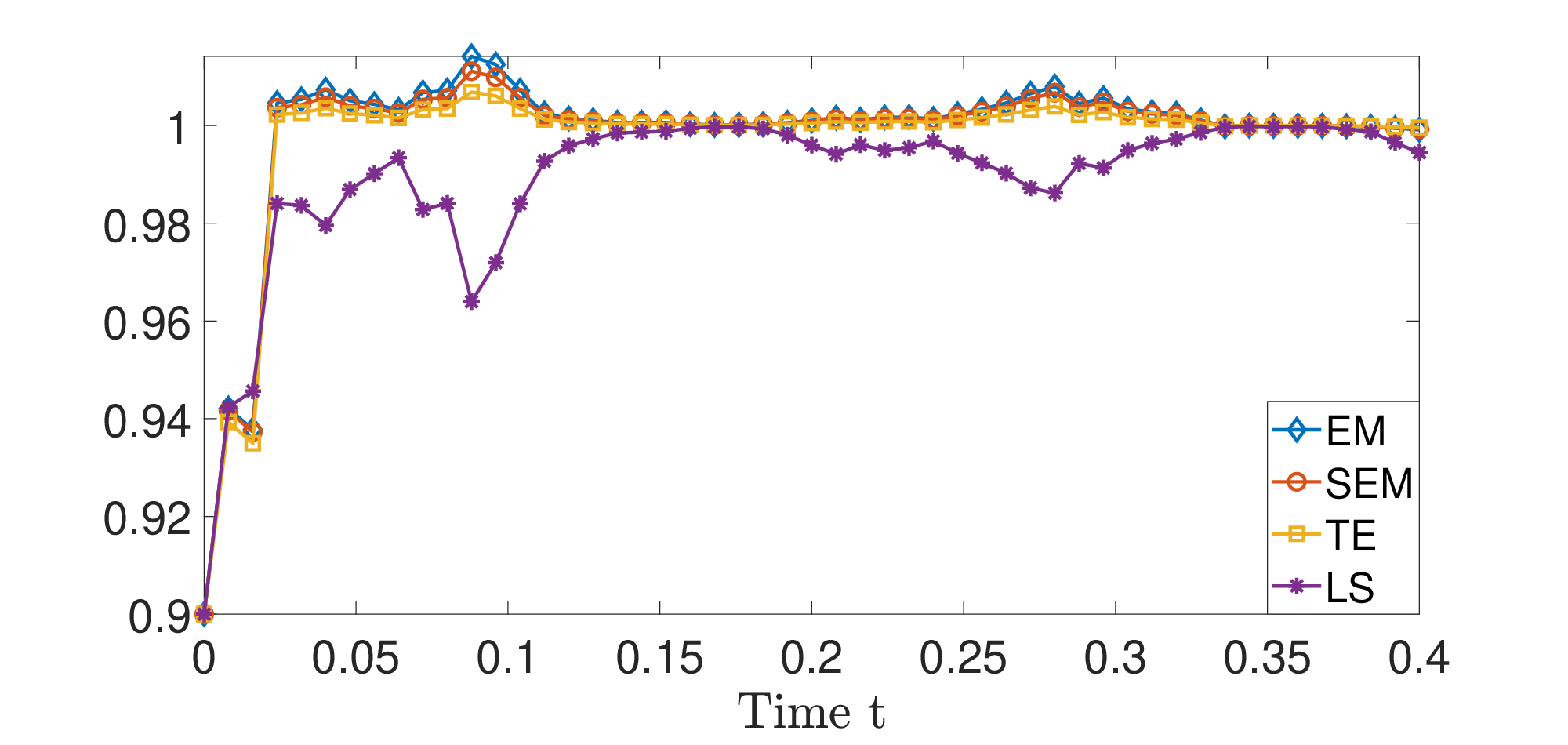}
  \caption{Path comparison of the EM, SEM, TE and LS schemes applied to the Allen-Cahn SDE with parameters $\lambda = 3$, $x_{0}=0.9$, $T = 0.4$ and $M=50$.}\label{num:AC_path_comp}
  \end{center}
\end{figure}

Similarly to the two previous examples, Table~\ref{tb:AC} shows that the integrators EM, SEM and, TE do not preserve the domain $(-1,1)$ of the Allen--Cahn SDE and the number of samples that preserve the domain $(-1,1)$ decreases as $\lambda>0$ increases. Table~\ref{tb:AC} also confirms that the LS schemes preserves the domain $(-1,1)$ of the Allen--Cahn SDE. In Table~\ref{tb:AC}, we used $\Delta t=10^{-3}$, $T=1$, $N=100$ number of samples and $x_{0}$ is uniformly distributed on $(-1,1)$ for each sample.

\begin{table}[!htbp]
\begin{center}
\begin{tabular}{||c c c c c||} 
 \hline
 $\lambda$ & LS & EM & SEM & TE \\ [0.5ex] 
 \hline\hline
 $3$ & $100/100$ & $100/100$ & $100/100$ & $100/100$ \\ 
 \hline
 $3.3$ & $100/100$ & $97/100$ & $97/100$ & $95/100$ \\ 
 \hline
 $3.6$ & $100/100$ & $74/100$ & $89/100$ & $82/100$ \\ [1ex]
 \hline
\end{tabular}
\caption{Proportion of samples containing only values in $(-1,1)$ out of $100$ simulated sample paths for the Lamperti-splitting scheme (LS), the Euler--Maruyama scheme (EM), the semi-implicit Euler--Maruyama scheme (SEM), and the tamed Euler scheme (TE) for the Allen--Cahn type SDE for different choices of $\lambda>0$, $T=1$, $\Delta t = 10^{-3}$ and with $x_{0}$ uniformly distributed on $(-1,1)$ for each sample. \label{tb:AC}}
\end{center}
\end{table}

\begin{figure}[htp]
\begin{center}
  \includegraphics[width=0.8\textwidth]{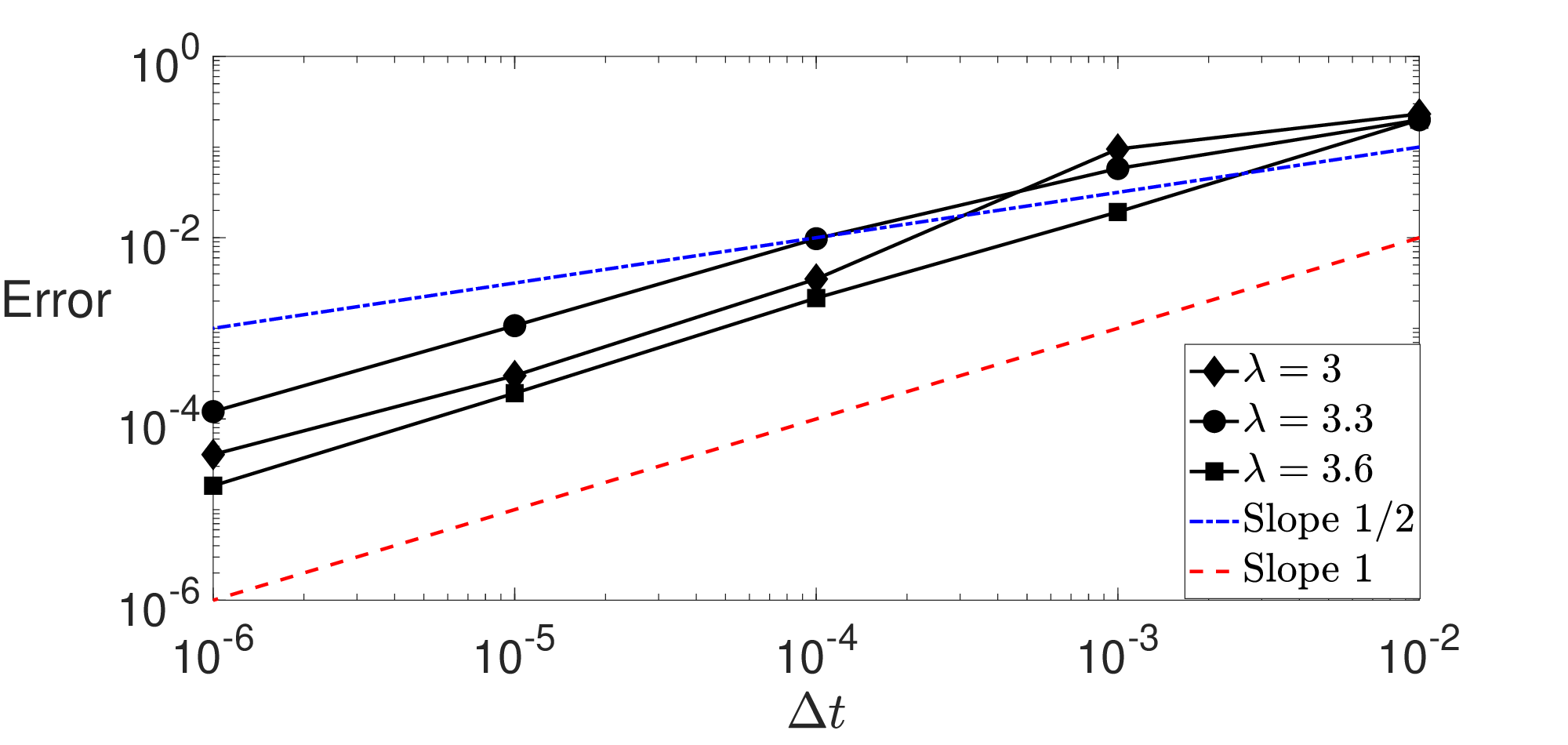}
  \caption{$L^{2}(\Omega)$-errors on the interval $[0,1]$ of the Lamperti-splitting scheme (LS) for the Allen--Cahn type SDE for different choices of $\lambda>0$ and reference lines with slopes $1/2$ and $1$. Averaged over $300$ samples.}\label{num:AC_conv}
  \end{center}
\end{figure}

In Figure~\ref{num:AC_conv} we present the $L^{2}(\Omega)$-errors for the same values of $\lambda$ as used in Table~\ref{tb:AC}. The $L^{2}(\Omega)$-error rates in Figure~\ref{num:AC_conv} agree with the rates obtained in Theorem~\ref{theo:main_v2}. We use $T=1$, $N=300$ number of samples to estimate the expected value and $x_{0}$ uniformly distributed on $(0,1)$ for each sample in Figure~\ref{num:AC_conv}.
\end{example}

\begin{appendix}
\section{Proof of a lemma}\label{sec:appA}
\begin{lemma}\label{lem:aux}
Recall that $\ell(s) = t_{k}$ whenever $s \in [t_{k},t_{k+1})$ and that $\left( B(t) \right)_{t \in [0,T]}$ denotes a standard Brownian motion. Then, for every $p \geq 1$, it holds that
\begin{equation*}
\E \left[ \left| \int_{0}^{T} \int_{\ell(s)}^{s} H'(Y(r)) \diff B(r) \diff s \right|^{p} \right] \leq C(p) L_{H} T^{p/2} \Delta t^{p}
\end{equation*}
where $C(p)$ is the BDG constant.
\end{lemma}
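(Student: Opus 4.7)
My plan is to convert the iterated integral into a single It\^o integral via the stochastic Fubini theorem, and then apply the Burkholder--Davis--Gundy (BDG) inequality together with the uniform bound $|H'(Y(r))|\le L_{H}$ from Proposition~\ref{prop:H}.

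First I would identify the region of integration. The domain is
\begin{equation*}
\{(s,r) : 0\le s\le T,\ \ell(s)\le r\le s\}.
\end{equation*}
Fixing $r\in[t_{k},t_{k+1})$, the constraints $\ell(s)\le r$ and $s\ge r$ force $s$ to lie in the same subinterval, i.e.\ $s\in[r,t_{k+1})$, so the Lebesgue measure of the admissible $s$-slice equals $t_{k+1}-r$, which is bounded by $\Delta t$. Writing $\kappa(r)=t_{k+1}$ for $r\in[t_{k},t_{k+1})$, the stochastic Fubini theorem (applicable because $H'(Y(\cdot))$ is adapted and bounded, so the integrands are in $L^{2}(\Omega\times[0,T])$) gives
\begin{equation*}
\int_{0}^{T}\!\int_{\ell(s)}^{s} H'(Y(r))\,\diff B(r)\,\diff s
= \int_{0}^{T} \bigl(\kappa(r)-r\bigr)\,H'(Y(r))\,\diff B(r).
\end{equation*}

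Next I would apply the BDG inequality to this single It\^o integral and use $|\kappa(r)-r|\le\Delta t$ together with $\sup_{x}|H'(x)|\le L_{H}$:
\begin{equation*}
\E\!\left[\left|\int_{0}^{T}\!(\kappa(r)-r)H'(Y(r))\,\diff B(r)\right|^{p}\right]
\le C(p)\,\E\!\left[\left(\int_{0}^{T}(\kappa(r)-r)^{2}|H'(Y(r))|^{2}\,\diff r\right)^{p/2}\right]
\le C(p)\,L_{H}^{p}\,T^{p/2}\,\Delta t^{p},
\end{equation*}
which yields the desired estimate (with $L_{H}$ replaced by $L_{H}^{p}$, matching the displayed constant up to the usual convention on the generic constant $C(p)$).

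The only delicate point is the invocation of stochastic Fubini, but here this is routine: $H'(Y(\cdot))$ is progressively measurable and almost surely bounded by $L_{H}$, and the weight $\mathbbm{1}_{[\ell(s),s]}(r)$ is a bounded deterministic kernel, so the joint integrability hypothesis is trivially verified. The remaining manipulations are standard.
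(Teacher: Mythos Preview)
Your proposal is correct and follows essentially the same route as the paper: stochastic Fubini to turn the double integral into a single It\^o integral with weight $\kappa(r)-r$ (the paper writes this as $\ell(r+\Delta t)-r$), followed by BDG and the bounds $|\kappa(r)-r|\le\Delta t$ and $|H'|\le L_H$. Your observation that the bound naturally produces $L_H^{p}$ rather than $L_H$ is well taken; the paper's final line has the same slip.
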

\begin{proof}[Proof of Lemma~\ref{lem:aux}]
The proof essentially consists of applying the stochastic Fubini theorem followed by the BDG inequality. We first re-write the integral as follows
\begin{equation*}
\int_{0}^{T} \int_{\ell(s)}^{s} H'(Y(r)) \diff B(r) \diff s = \int_{0}^{T} \int_{0}^{T} \mathbbm{1}_{\{ r \in (\ell(s),s) \}}(r,s) H'(Y(r)) \diff B(r) \diff s
\end{equation*}
where $\mathbbm{1}_{A}(x)=1$ if $x \in A$ and $\mathbbm{1}_{A}(x)=0$ if $x \not\in A$. We now apply the stochastic Fubini's theorem
\begin{equation*}
\E \left[ \left| \int_{0}^{T} \int_{\ell(s)}^{s} H'(Y(r)) \diff B(r) \diff s \right|^{p} \right] = \E \left[ \left| \int_{0}^{T} \int_{0}^{T} \mathbbm{1}_{\{ r \in (\ell(s),s) \}}(r,s) H'(Y(r)) \diff s \diff B(r) \right|^{p} \right]
\end{equation*}
followed by the Burkholder-Davis-Gundy (BDG) inequality to estimate
\begin{multline}
\E \left[ \left| \int_{0}^{T} \int_{0}^{T} \mathbbm{1}_{\{ r \in (\ell(s),s) \}}(r,s) H'(Y(r)) \diff s \diff B(r) \right|^{p} \right] \nonumber \\ \leq C(p) \E \left[ \left( \int_{0}^{T} \left( \int_{0}^{T} \mathbbm{1}_{\{ r \in (\ell(s),s) \}}(r,s) H'(Y(r)) \diff s \right)^{2} \diff r \right)^{p/2} \right],
\end{multline}
where $C(p)$ is the constant from the BDG inequality. We now use that the inner integral can be expressed as
\begin{equation*}
\begin{split}
\int_{0}^{T} \mathbbm{1}_{\{ r \in (\ell(s),s) \}}(r,s) H'(Y(r)) \diff s &= H'(Y(r)) \int_{0}^{T} \mathbbm{1}_{\{ r \in (\ell(s),s) \}}(r,s) \diff s \\ &= H'(Y(r)) \left( \ell(r+\Delta t) - r \right)
\end{split}
\end{equation*}
and so we can estimate
\begin{equation*}
\begin{split}
\left( \int_{0}^{T} \mathbbm{1}_{\{ r \in (\ell(s),s) \}}(r,s) H'(Y(r)) \diff s \right)^{2} &= |H'(Y(r))|^{2} \left( \ell(r+\Delta t) - r \right)^{2} \\ &\leq |H'(Y(r))|^{2} \Delta t^{2}.
\end{split}
\end{equation*}
Collecting everything gives us the desired estimate
\begin{equation*}
\begin{split}
\E \left[ \left| \int_{0}^{T} \int_{\ell(s)}^{s} H'(Y(r)) \diff B(r) \diff s \right|^{p} \right] &\leq C(p) \E \left[ \left( \int_{0}^{T} |H'(Y(r))|^{2} \Delta t^{2} \diff r \right)^{p/2} \right] \\ &= C(p) \sup_{x \in \mathbb{R}} |H'(x)| T^{p/2} \Delta t^{p} \\ &\leq C(p) L_{H} T^{p/2} \Delta t^{p},
\end{split}
\end{equation*}
where we in the last inequality used Proposition~\ref{prop:H}.
\end{proof}

\section{Additional formulas}\label{sec:app}
Here we provide a detailed description of the LS scheme for the three considered examples in Section~\ref{sec:num}. We present explicit formulas for both $y(t)$ in equation~\eqref{eq:ODE-LampMain} and for $X^{LS} = \Phi^{-1}(Y^{LS})$. We denote by $\log$ the natural logarithm.

\subsection{SIS SDE}\label{sec:app-SIS}
Recall that the SIS epidemic model is given by
\begin{equation*}
dX(t) = X(t) \left(1- X(t) \right) dt + \lambda X(t)(1-X(t))dB(t)
\end{equation*}
with initial value $X(0)=x_{0} \in (0,1)$. The boundary points $\{0,1 \}$ are stationary points: if $x_{0} \in \{ 0,1 \}$, then $X(t) = x_{0}$ for all times $t>0$. Let $w_{0} \in (0,1)$. Direct computations give
\begin{align*}
  \frac{f(x)}{g(x)} - \frac{\lambda^2}{2} g'(x) & = \lambda^2 x + (1 - \lambda^2/2), \\
  \Phi(x) & =\log(x) - \log(1-x) - \log(w_{0}) + \log(1-w_{0})
\end{align*}
and
\begin{equation*}
\Phi^{-1}(x) = \frac{w_{0} e^{x}}{w_{0} e^{x} + (1-w_{0})}.
\end{equation*}
If we let $H(x) = \lambda^2 \Phi^{-1}(x)$ and $\mu = 1 - \lambda^2/2$, then the assumptions in Section~\ref{sec:setting} and Section~\ref{sec:scheme_v2} are fulfilled: Assumptions~\ref{ass:f} and~\ref{ass:g},~\ref{ass:fg} are easily checked and the choice of $H$ implies that the ODE 
\begin{equation}\label{eq:SISODE}
\left\lbrace
\begin{aligned}
& \frac{\diff y(t)}{\diff t}= H(y(t)) = \lambda^2 \Phi^{-1}(y(t)),\\ 
& y(t_{m}) = \Phi(x_{m}),
\end{aligned}
\right.
\end{equation}
for $t \in [t_{m},t_{m+1}]$, where $x_{m} = X^{LS}(t_{m})$, has the solution formula given by
\begin{equation}\label{eq:SISy}
\begin{split}
y(t) &= W \left( \frac{(1-x_{m})e^{\frac{1-x_{m}}{x_{m}}}}{x_{m} e^{\lambda^{2} (t-t_{m})}} \right) + \lambda^2 (t-t_{m}) \\ &+ \log \left( \frac{x_{m}}{1-x_{m}} \frac{1-w_{0}}{w_{0}} \right) - \frac{1-x_{m}}{x_{m}},
\end{split}
\end{equation}
where $W$ is the Lambert W function \cite{LampW}. Since the above formula is not implementable in practice, we use the standard Euler scheme to approximate the solution $y$ in~\eqref{eq:SISy} to the ODE in~\eqref{eq:SISODE}. By inserting the explicit formula for $y(t)$ in equation~\eqref{eq:SISy} into the defining formula for $Y^{LS}(t)$ in equation~\eqref{eq:LampSplitDisc}, we obtain, after simplifications, that
\begin{equation*}
\begin{split}
X^{LS}(t_{m+1}) &= \Phi^{-1}(Y^{LS}(t_{m+1})) \\ &= \frac{e^{(1 - (\lambda^{2})/2) (t_{m+1}-t_{m})} e^{\lambda (B(t_{m+1}) - B(t_{m}))}}{e^{(1 - (\lambda^{2})/2) (t_{m+1}-t_{m})} e^{\lambda (B(t_{m+1}) - B(t_{m}))} + W \left( \frac{(1-x_{m})e^{\frac{1-x_{m}}{x_{m}}}}{x_{m} e^{\lambda^{2} (t_{m+1}-t_{m})}} \right)}.
\end{split}
\end{equation*}

\subsection{Nagumo SDE}\label{sec:app-NAG}
Recall that the Nagumo SDE is given by
\begin{equation*}
\diff X(t) = - X(t)(1-X(t))(1-X(t)) \diff t - \lambda X(t) (1-X(t)) \diff B(t)
\end{equation*}
with initial value $X(0) = x_{0} \in (0,1)$. The boundary points $\{ 0,1 \}$ are stationary points: if $x_{0} \in \{ 0,1 \}$, then $X(t)=x_{0}$ for all times $t>0$. Let $w_{0} \in (0,1)$. Similarly to the SIS SDE in Section~\ref{sec:app-SIS}, direct computations give us
\begin{align*}
  \frac{f(x)}{g(x)} - \frac{\lambda^2}{2} g'(x) & = \left( 1 + \frac{\lambda^2}{2} \right) - \left(1 + \lambda^2 \right)x, \\
  \Phi(x) & = \log(1-x) - \log(x) - \log(1-w_{0}) + \log(w_{0})
\end{align*}
and
\begin{equation*}
\Phi^{-1}(x) = \frac{w_{0}}{(1-w_{0})e^{ x} + w_{0}}.
\end{equation*}
Let now $H(x) = -(1 + \lambda^2) \Phi^{-1}(x)$ and $\mu = (1 + \frac{\lambda^2}{2})$. One checks that the assumptions in Section~\ref{sec:setting} and Section~\ref{sec:scheme_v2} are fulfilled: Assumptions~\ref{ass:f},~\ref{ass:g} and~\ref{ass:fg} are easily verified and the ODE
\begin{equation}\label{eq:NanODE}
\left\lbrace
\begin{aligned}
& \frac{\diff y(t)}{\diff t}= -(1 + \lambda^2) \Phi^{-1}(y(t)),\\ 
& y(t_{m}) = \Phi(x_{m}),
\end{aligned}
\right.
\end{equation}
for $t \in [t_{m},t_{m+1}]$, where $x_{m} = X^{LS}(t_{m})$, has the solution formula given by
\begin{equation}\label{eq:Nany}
\begin{split}
y(t) &= - W \left( \frac{(1-x_{m})e^{\frac{1-x_{m}}{x_{m}}}}{x_{m} e^{(1+\lambda^{2}) (t-t_{m})}} \right) - (1 + \lambda^2) (t-t_{m}) \\ &+ \log \left(\frac{1-x_{m}}{x_{m}} \frac{w_{0}}{1-w_{0}} \right) + \frac{1-x_{m}}{ x_{m}},
\end{split}
\end{equation}
where $W$ is the Lambert W function \cite{LampW}. Since the above formula is not implementable in practice, we use the standard Euler scheme to approximate the solution $y$ in~\eqref{eq:Nany} to the ODE in~\eqref{eq:NanODE}. We insert the formula in equation~\eqref{eq:Nany} into equation~\eqref{eq:LampSplitDisc} to obtain, after simplifications, that
\begin{multline*}
X^{LS}(t_{m+1}) = \Phi^{-1}(Y^{LS}(t_{m+1})) \\ = \left( W \left( \frac{(1-x_{m})e^{\frac{1-x_{m}}{x_{m}}}}{x_{m} e^{(1+\lambda^{2}) (t_{m+1}-t_{m})}} \right) e^{(1 + (\lambda^{2})/2) (t_{m+1}-t_{m}) + \lambda (B(t_{m+1}) - B(t_{m}))} + 1 \right)^{-1}.
\end{multline*}

\subsection{Allen--Cahn SDE}\label{sec:app-AC}
Recall that the Allen--Cahn type SDE is given by
\begin{equation*}
\diff X(t) = \left( X(t) - X(t)^{3} \right) \diff t + \lambda (1-X(t)^2) \diff B(t)
\end{equation*}
with initial value $X(0)=x_{0} \in (-1,1)$.
The boundary points $\{ -1,1 \}$ are stationary points: if $x_{0} \in \{ -1,1 \}$, then $X(t)=x_{0}$ for all times $t>0$. Observe that $0$ is not stationary, since $g(0) \neq 0$. In this case, we present the implementation formulas for the choice $w_{0} = 0$ as this simplifies the expressions. Straightforward computations give us
\begin{align*}
  \frac{f(x)}{g(x)} - \frac{\lambda^2}{2} g'(x) & = (1 + \lambda^2) x, \\
  \Phi(x) & = \frac{1}{2} \left( \log(1+x) - \log(1-x) \right)
\end{align*}
and
\begin{equation*}
\Phi^{-1}(x) = \frac{e^{2 x} - 1}{e^{2 x} + 1}.
\end{equation*}
We let $H(x) = (1 + \lambda^2) \Phi^{-1}(x)$ and $\mu = 0$. Then assumptions in Section~\ref{sec:setting} and Section~\ref{sec:scheme} are fulfilled: Assumptions~\ref{ass:f},~~ \ref{ass:g} and~\ref{ass:fg} are easily verified and the ODE
\begin{equation*}
\left\lbrace
\begin{aligned}
& \frac{\diff y(t)}{\diff t}= (1 + \lambda^2) \Phi^{-1}(y(t)),\\ 
& y(t_{m}) = y_{m}=\ \Phi(x_{m}),
\end{aligned}
\right.
\end{equation*}
for $t \in [t_{m},t_{m+1}]$, where $x_{m} = X^{LS}(t_{m})$, has the explicit solution formula given by

\begin{equation}\label{eq:ACy}
  \begin{split}
    y(t)
    & = \log \left( \frac{1}{2} \left( \sqrt{ e^{2 (1+\lambda^2) (t-t_{m})} \left( e^{-y_{m}} - e^{y_{m}} \right)^{2} + 4} \right. \right.\\
    & \quad \left. \left.- e^{(1+\lambda^2)(t-t_{m})} \left( e^{-y_{m}} - e^{y_{m}} \right) \right) \right).
  \end{split}
\end{equation}
Combining equation~\eqref{eq:ACy} with equation~\eqref{eq:LampSplitDisc} gives us
\begin{equation*}
\begin{split}
X^{LS}(t_{m+1}) &= \Phi^{-1}(Y^{LS}(t_{m+1})) \\ &= \frac{V(t_{m+1}) e^{2 \lambda \left( B(t_{m+1}) - B(t_{m}) \right)} -(1-x_{m})(1+x_{m})}{V(t_{m+1}) e^{2 \lambda \left( B(t_{m+1}) - B(t_{m}) \right)} + (1-x_{m})(1+x_{m})},
\end{split}
\end{equation*}
where $x_{m} = X^{LS}(t_{m})$ and where
\begin{equation*}
V(t) = \left( \sqrt{ \left(x_{m} \right)^2 e^{2 (1+\lambda^2) (t-t_{m})} + (1-x_{m}) (1+x_{m})} + x_{m} e^{(1+\lambda^2) (t-t_{m})} \right)^2.
\end{equation*} 

\end{appendix}

\section*{Acknowledgements}
The author would like to thank the anonymous referees for their comments and suggestions which helped to improve the quality of this article. The author would also like to give a special thank you to David Cohen for his comments, suggestions and sharing some of his codes. Lastly, the author would also like to thank Charles-Edouard Bréhier for reading and discussing the original and revised versions of the paper. This work is partially supported by the Swedish Research Council (VR) (projects nr. $2018-04443$). The computations were enabled by resources provided by the National Academic Infrastructure for Supercomputing in Sweden (NAISS) and the Swedish National Infrastructure for Computing (SNIC) at UPPMAX, Uppsala University, partially funded by the Swedish Research Council through grant agreements no. 2022-06725 and no. 2018-05973.

\bibliographystyle{abbrv}
\bibliography{LSbib}

\end{sloppypar}
\end{document}